\newtheorem{theorem}{Theorem}
\newtheorem*{theorem*}{Favard Theorem}
\newtheorem{lemma}{Lemma}[section]
\newtheorem{proposition}{Proposition}[section]
\newtheorem{remark}{Remark}[section]
\newtheorem{example}{Example}[section]
\newcounter{Th-Alfa}
\newcommand{\CC}{\mathds{C}}
\newcommand{\ZZp}{\mathds{Z}_{+}}
\newcommand{\ZZi}{\overline{\mathds{Z}}_{+}}
\newcommand{\RRp}{\mathds{R}_{+}}
\newcommand{\PP}{\mathds{P}}
\newcommand{\MM}{\mathds{M}}
\newcommand{\HH}{\mathds{H}}
\newcommand{\RR}{\mathds{R}}
\newcommand{\ZZ}{\mathds{Z}}
\newcommand{\sopor}[1]{\mathrm{supp}\left(#1 \right)}
\newcommand{\dsty}{\displaystyle}
\newcommand{\tran}{\mathsf{T}}
\newcommand{\han}[1]{\mathcal{H}\left[#1 \right]}
\newcommand{\diag}[1]{\mathrm{Diag}\!\left(#1\right)}
\title{Hessenberg-Sobolev matrices and Favard  type theorems}
\author[1]{H\'{e}ctor Pijeira-Cabrera\thanks{hpijeira@math.uc3m.es}}
\author[1]{Laura  Decalo-Salgado \thanks{100336220@alumnos.uc3m.es}} 
\author[2]{Ignacio P\'{e}rez-Yzquierdo\thanks{iperez58@uasd.edu.do}\thanks{Research partially supported by  Fondo Nacional de Innovaci\'{o}n  y Desarrollo Cient\'{\i}fico y Tecnol\'{o}gico (FONDOCYT),  Dominican Republic, under grant   {2016-2017-080} No. 013-2018.}}
\affil[1]{Departamento de Matem\'{a}ticas\\Universidad Carlos III de Madrid\\ Spain}
\affil[2]{Universidad Aut\'{o}noma de Santo Domingo\\ Dominican Republic}
\begin{document}

\maketitle

\begin{abstract}
 We study the relation  between certain non-degenerate lower Hessenberg infinite matrices $\mathcal{G}$ and the existence of sequences of orthogonal polynomials   with respect to Sobolev  inner products. In other words,  we extend the well-known Favard theorem for Sobolev orthogonality.   We characterize the structure of the matrix $\mathcal{G}$ and the   associated matrix of formal moments $\mathcal{M}_{\mathcal{G}}$ in terms of certain matrix operators.
\end{abstract}

\vspace{1cm} \noindent {\sl Keywords:} Sobolev orthogonality, orthogonal polynomials, moment problem, Favard theorem, Hessenberg matrices, Hankel matrices

\noindent
{\sl AMS Subject classification [2020]:} 42C05, 33C47, 44A60, 30E05, 11B37, 47B35
\section{Introduction}

Let $\mathcal{A}=(a_{i,j})_{i,j=0}^{\infty}$   be  an infinite matrix of real numbers. For  $m\in \ZZ$,   we say that the entry $a_{i,j}$ lies in the $m$-diagonal if $j=i+m$. Obviously, the $0$-diagonal is the usual main diagonal of $\mathcal{A}$.     The matrix  $\mathcal{A}$ is an \emph{ $m$-diagonal matrix} if all of its nonzero elements lie in its $m$-diagonal and \emph{lower (upper) triangular  matrix} if  $a_{i,j}=0$ whenever $j>i$ ($j<i$). The symbols $\mathcal{A}^{\tran}$ and $[\mathcal{A}]_n$  denote the transposed matrix and the squared matrix of the first $n$ rows and columns of  $\mathcal{A}$, respectively.
$\mathcal{I}$  is called the \emph{unit matrix};   its $(i,j)$th entry is  $\delta_{i,j}$ where $\delta_{i,j}=0$ if $i\neq j$  and $\delta_{i,i}=1$. $\mathcal{A}$ is called \emph{positive definite of infinite order}  if $\det ([\mathcal{A}]_n)>0$  for all $n\geq 1$, where $\det ([\mathcal{A}]_n)$ is the determinant of  $[\mathcal{A}]_n$. If $\det ([\mathcal{A}]_n)>0$  for all $1 \leq n\leq k$ and $\det ([\mathcal{A}]_n)=0$  for all $ n> k$, we say that  $\mathcal{A}$ is a \emph{positive definite matrix of order $k$}.

According to   the definitions given in  \cite[Ch. II]{Coo50}, if  $\mathcal{A}$ and $\mathcal{B}$ are two   infinite matrices such that $\mathcal{A}\cdot \mathcal{B}=\mathcal{I}$, then $\mathcal{B}$  is called a right-hand  inverse of $\mathcal{A}$, denoted by $\mathcal{A}^{-1}$; and $\mathcal{A}$ is called a left-hand  inverse  of $\mathcal{B}$,  denoted by $^{-1}\mathcal{B}$. The transposed  of $\mathcal{A}^{-1}$  ($^{-1}\!\mathcal{A}$ ) and  $\mathcal{A}^{m}$\; (the $m$th power of the matrix $\mathcal{A}$, with $m \in \ZZp$)  are denoted by $\mathcal{A}^{-\tran}$ ($^{-\tran}\!\mathcal{A}$)  and $\mathcal{A}^{m\tran}$ respectively. Moreover,  $\mathcal{A}^{-m}= \left(\mathcal{A}^{-1}\right)^m$,  where  $m \in \ZZp$.

A difficulty of dealing with infinite matrices is that matrix products can be ill-defined (c.f. \cite[(1)]{CanMaMoVa}).  Nevertheless, in this paper we will only consider the product of   infinite matrices $\mathcal{A}=(a_{i,j})_{i,j=0}^{\infty}$ and $\mathcal{B}=(b_{i,j})_{i,j=0}^{\infty}$ when $\mathcal{A}\mathcal{B}=(\sum_{k} a_{i,k}b_{k,j})_{i,j=0}^{\infty}$ is  such that each  sum  ($i,j$-dependent)  involves only a finite number of non-null summands (c.f. \cite[Def. 1]{CanMaMoVa}).

We will denote by $\mathcal{U}$ the infinite matrix whose  $(i,j)$th entry is $\delta_{i+1,j}$ for  $i,j\in \ZZp$; i.e. the \emph{upper (or backward) shift infinite matrix} given by the expression
    \begin{equation*}\label{UpperShift}
 \mathcal{U}=\left( \begin{array}{cccc}
               0 & 1 & 0 & \cdots \\
0 & 0 & 1 &  \cdots \\
0 &0 & 0 &  \cdots \\
\vdots & \vdots & \vdots & \ddots
              \end{array}
     \right).
\end{equation*}
The matrix $\mathcal{U}^{\tran}$ is called   the \emph{lower (or forward) shift infinite matrix} and it is easy to check that $\dsty \mathcal{U}\cdot \mathcal{U}^{\tran}= \mathcal{I};$ i.e. $\mathcal{U}^{\tran}=\mathcal{U}^{-1}$, the right hand inverse of $\mathcal{U}$ (c.f. \cite[Sec. \;  0.9]{HorJoh13}).

An infinite  Hankel matrix  is an infinite matrix in which each ascending skew-diagonal from left to right is constant. In other words, $\mathcal{H}=(h_{i,j})_{i,j=0}^{\infty}$ is a Hankel matrix if $h_{i,j+1}=h_{i+1,j}$ for all $i,j \in \ZZp$ or equivalently if
\begin{equation}\label{HankelCond}
\mathcal{U} \mathcal{H}-\mathcal{H} \mathcal{U}^{-1}=\mathcal{O},
\end{equation}
where $\mathcal{O}$ denote  the infinite null matrix.  If  $\{ r_i\}_{i=0}^{\infty}$ is  a sequence of real numbers,  we denote $\diag{r_i}$ the infinite diagonal matrix whose $i$th main diagonal entry is $r_i$, and  by $\han{ r_i}$  the associated Hankel matrix,  defined as
  $$\han{ r_i }=\left( \begin{array}{cccc}
r_0 & r_1 &  r_2 & \cdots \\
r_1 & r_2 &  r_3 & \cdots \\
 r_2  & r_3  &  r_4  &  \cdots  \\
\vdots  & \vdots  &  \vdots &   \ddots
\end{array}\right).$$

We say that a matrix  $\mathcal{M}=(m_{i,j})_{i,j=0}^{\infty}$  is a \emph{Hankel-Sobolev matrix}  if there exists a sequence of Hankel matrices $\left\{\mathcal{H}_k\right\}_{k=0}^{\infty}$ such that
\begin{equation}\label{Hankel-Sobolev}
   \mathcal{M}= \sum_{k=0}^{\infty} \left(\mathcal{U}^{-k}\;\mathcal{D}_k \;  \mathcal{H}_{k} \;  \mathcal{D}_k\;\mathcal{U}^{k}\right),
\end{equation}
where $\mathcal{D}_0=\mathcal{I}  $ and  $\mathcal{D}_k=\diag{\frac{(k+i)!}{i!}}$  for each $k>0$, e.g. 
      $$   \mathcal{D}_1=\begin{pmatrix}1 & 0 & 0 & \cdots\\
0 & 2 & 0 & \cdots\\
0 & 0 & 3 &\cdots\\
\vdots &\vdots & \vdots & \ddots \end{pmatrix}, \; \mathcal{D}_2=\begin{pmatrix}2 & 0 & 0 &\cdots\\
0 & 6 & 0 & \cdots\\
0 & 0 & 12 & \cdots\\
\vdots &\vdots & \vdots &\ddots\end{pmatrix} \text{ and } \mathcal{D}_3=\begin{pmatrix}6 & 0 & 0 &\cdots\\
0 & 24 & 0 & \cdots\\
0 & 0 & 60 & \cdots\\
\vdots &\vdots & \vdots & \ddots\end{pmatrix}.      $$

We say that a Hankel-Sobolev matrix $\mathcal{M}$ is of \emph{index} $d \in \ZZp$ if $\mathcal{H}_d\neq\mathcal{O}$ and $\mathcal{H}_k=\mathcal{O}$ for all $k>d$. Otherwise, we will say that $\mathcal{M}$ is of \emph{infinite index}.

Let  $\mathcal{M}$ be a Hankel-Sobolev matrix of index  $d \in \ZZi=\ZZp \cup \{\infty\}$. If $\mathcal{H}_k\neq \mathcal{O}$ for all $k<d$, we say that $\mathcal{M}$   is \emph{non-lacunary} and   \emph{lacunary} in any other case.

Hankel-Sobolev matrices appeared for the first time in \cite{BaLoPi99,Pij98} in close connection with the moment problem for a Sobolev inner product. Some of the properties of this class of infinite matrices have also been studied in  \cite{Di08,MaSz00,MaSz01,PiQuiRo11,RoSa03,Zag05}.

Let   $\MM$  be the linear space of all infinite matrices of real numbers. For  each  $\eta \in \ZZp$ fixed, we denote by $\Phi( \cdot,\eta)$ the  operator   from $\MM$ to itself given by the expression
\begin{align}\label{Phi-operator}
\Phi( \mathcal{A},\eta):=&\sum_{\ell=0}^{\eta} (-1)^\ell \binom{\eta}{\ell}  \mathcal{U}^{\eta-\ell} \mathcal{A} \mathcal{U}^{-\ell},
\end{align}
where  $\mathcal{A} \in \MM$  and $\binom{\eta}{\ell} $ denote    binomial coefficients. Obviously,  $\Phi( \cdot,\eta)$ is a  linear operator.

One of the main results of this work is the following intrinsic  characterization of the Hankel-Sobolev matrices using the operator $\Phi( \cdot,\eta)$, which we will be prove in section \ref{SecMatrixOper}.

\begin{theorem} \label{ThCharact-HankelS}
An  infinite matrix  $\mathcal{M}$  is a Hankel-Sobolev matrix of index $d \in {\ZZp}$, if and only if  $\mathcal{M}$ is a symmetric matrix and
\begin{equation}\label{ThCharact-01}
\Phi( \mathcal{M},2d+1)=\mathcal{O} \quad \text{ and } \quad \Phi( \mathcal{M},2d)\neq\mathcal{O}.\end{equation}
Moreover, for $k=0,\,1,\dots,\;d$;  the Hankel matrix  $\mathcal{H}_{d-k}$ in \eqref{Hankel-Sobolev} is given by
\begin{equation}\label{ThCharact-02}
 \mathcal{H}_{d-k}= \frac{(-1)^{d-k}}{(2d-2k)!}\; \Phi( \mathcal{M}_{d-k},2 d-2k),
\end{equation}
where $\dsty \mathcal{M}_d=\mathcal{M}$ and  $\dsty \mathcal{M}_{d-k}=\mathcal{M}_{d-k+1}- \mathcal{U}^{-d-1+k}  \mathcal{D}_{d+1-k} \mathcal{H}_{d+1-k} \mathcal{D}_{d+1-k} \mathcal{U}^{d+1-k}$  for  $k=1,\;2,\dots, \;  d$.
\end{theorem}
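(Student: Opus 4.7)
The proof rests on the entrywise formula
$$\Phi(\mathcal{A},\eta)_{i,j}=\sum_{\ell=0}^{\eta}(-1)^\ell\binom{\eta}{\ell}a_{i+\eta-\ell,\,j+\ell},$$
which exhibits $\Phi(\mathcal{A},\eta)_{i,j}$ (up to sign) as the $\eta$-th forward finite difference of $\mathcal{A}$ along the anti-diagonal through $(i+\eta,j)$ and $(i,j+\eta)$.  From this I extract three tools: (i) $\Phi(\cdot,1)\mathcal{A}=\mathcal{U}\mathcal{A}-\mathcal{A}\mathcal{U}^{-1}$, so by \eqref{HankelCond} a matrix $\mathcal{H}$ is Hankel iff $\Phi(\mathcal{H},1)=\mathcal{O}$; (ii) a direct double-sum computation together with Vandermonde's identity gives the composition rule $\Phi(\Phi(\cdot,\eta_1),\eta_2)=\Phi(\cdot,\eta_1+\eta_2)$; (iii) the substitution $\ell\mapsto\eta-\ell$ yields $\Phi(\mathcal{A},\eta)^{\tran}=(-1)^\eta\Phi(\mathcal{A}^{\tran},\eta)$, so $\Phi(\mathcal{A},\eta)$ is symmetric for even $\eta$ whenever $\mathcal{A}$ is symmetric.

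For the \emph{forward implication}, I first compute $(\mathcal{U}^{-k}\mathcal{D}_k\mathcal{H}_k\mathcal{D}_k\mathcal{U}^k)_{a,b}=(k!)^2\binom{a}{k}\binom{b}{k}s_k(a+b-2k)$, where $s_k$ denotes the Hankel sequence of $\mathcal{H}_k$ (the formula extends to $a,b<k$ since $\binom{a}{k}=0$ there).  Along the anti-diagonal $a+b=i+j+\eta$ the factor $s_k(i+j+\eta-2k)$ is constant and so it factors out of $\Phi(\cdot,\eta)_{i,j}$; what remains is the $\eta$-th forward difference of the polynomial $\ell\mapsto\binom{i+\eta-\ell}{k}\binom{j+\ell}{k}$, which has degree $2k$ in $\ell$.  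This vanishes for $\eta>2k$ and, for $\eta=2k$, equals $(-1)^k(2k)!/(k!)^2$ by a leading-coefficient computation.  Summing $k=0,\dots,d$ gives $\Phi(\mathcal{M},\eta)=\mathcal{O}$ for every $\eta>2d$ (in particular $\Phi(\mathcal{M},2d+1)=\mathcal{O}$) and $\Phi(\mathcal{M},2d)=(-1)^d(2d)!\,\mathcal{H}_d\neq\mathcal{O}$.  This establishes \eqref{ThCharact-01} and the $k=0$ case of \eqref{ThCharact-02}; the remaining cases follow by applying the same statement to $\mathcal{M}_{d-1}:=\mathcal{M}-\mathcal{U}^{-d}\mathcal{D}_d\mathcal{H}_d\mathcal{D}_d\mathcal{U}^d$, which is Hankel-Sobolev of type at most $d-1$, and iterating.

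For the \emph{converse} I induct on $d$; the base case $d=0$ is exactly tool~(i).  For the step, assume the result for $d-1$ and let $\mathcal{M}$ be symmetric with $\Phi(\mathcal{M},2d+1)=\mathcal{O}$ and $\Phi(\mathcal{M},2d)\neq\mathcal{O}$.  Set $\mathcal{H}_d:=\frac{(-1)^d}{(2d)!}\Phi(\mathcal{M},2d)$.  Tool~(ii) gives $\Phi(\mathcal{H}_d,1)=\frac{(-1)^d}{(2d)!}\Phi(\mathcal{M},2d+1)=\mathcal{O}$, so $\mathcal{H}_d$ is Hankel and non-null; by tool~(iii) with $\eta=2d$ it is also symmetric.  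Form $\mathcal{M}_{d-1}:=\mathcal{M}-\mathcal{U}^{-d}\mathcal{D}_d\mathcal{H}_d\mathcal{D}_d\mathcal{U}^d$, which is symmetric since $(\mathcal{U}^d)^{\tran}=\mathcal{U}^{-d}$.  The main obstacle is to verify $\Phi(\mathcal{M}_{d-1},2d-1)=\mathcal{O}$, so that the inductive hypothesis applies; I plan to do this through the anti-diagonal picture.  The condition $\Phi(\mathcal{M},2d+1)=\mathcal{O}$, combined with symmetry, means that for every $N$ the function $a\mapsto m_{a,N-a}$ is a polynomial in $a$ of degree at most $2d$, invariant under $a\leftrightarrow N-a$, and the same holds for the subtracted summand.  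A direct computation (essentially the $\eta=2d$ case of the forward direction) shows that the $a^{2d}$-coefficients of the two anti-diagonal polynomials coincide, both equal to $(-1)^d s_d(N-2d)$; their difference therefore has degree $\leq 2d-1$, and the $a\leftrightarrow N-a$ symmetry of a polynomial of odd leading degree forces the $a^{2d-1}$-coefficient to vanish, leaving degree $\leq 2d-2=2(d-1)$.  This is precisely $\Phi(\mathcal{M}_{d-1},2d-1)=\mathcal{O}$, so the inductive hypothesis yields $\mathcal{M}_{d-1}=\sum_{k=0}^{d-1}\mathcal{U}^{-k}\mathcal{D}_k\mathcal{H}_k\mathcal{D}_k\mathcal{U}^k$, and the full decomposition \eqref{Hankel-Sobolev} for $\mathcal{M}$, together with \eqref{ThCharact-02} for $k\geq 1$, follows by iterating the definition of $\mathcal{M}_{d-k}$.
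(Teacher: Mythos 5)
Your proposal is correct, and at the level of strategy it mirrors the paper's proof: the forward implication is a direct computation reducing each block $\mathcal{U}^{-k}\mathcal{D}_k\mathcal{H}_k\mathcal{D}_k\mathcal{U}^k$ to a finite difference of a polynomial of degree $2k$ and invoking Euler's finite--difference theorem (exactly Lemma \ref{Euler_Lem} and the computation in Theorem \ref{Th-Suff}), and the converse peels off the top Hankel layer $\mathcal{H}_d=\frac{(-1)^d}{(2d)!}\Phi(\mathcal{M},2d)$ and shows the index drops by one via a parity argument. Where you differ is in the tooling, and both differences are improvements in economy. First, your composition rule $\Phi(\Phi(\cdot,\eta_1),\eta_2)=\Phi(\cdot,\eta_1+\eta_2)$ (via Vandermonde) strictly generalizes the paper's one-step recurrence (Proposition \ref{RecF-1}) and makes Proposition \ref{PropHankel} transparent. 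Second, and more substantially, you replace the paper's matrix-identity machinery (Lemma \ref{SymmMatrix}, Proposition \ref{lemma-symmetric}, and the ``Hankel hence symmetric, yet antisymmetric, hence zero'' step in the theorem containing \eqref{Th-Suff3}) by a single scalar picture: each anti-diagonal of a symmetric matrix with $\Phi(\cdot,2d+1)=\mathcal{O}$ is a polynomial of degree at most $2d$, invariant under $a\leftrightarrow N-a$, and after subtracting the top layer (whose anti-diagonal polynomial has the same leading coefficient $(-1)^d s_d(N-2d)$) the invariance kills the odd top coefficient, dropping the degree to $2(d-1)$. That is the same parity argument as the paper's symmetric-versus-antisymmetric clash, but it unifies parts (a) and (b) of Theorem \ref{Th-Suff} and the descent step into one computation. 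Two small points you should make explicit in a write-up: the forward direction should also record that $\mathcal{M}$ is symmetric (immediate, since each summand is); and the induction in the converse should be stated for the decomposability claim with $\mathcal{H}_k$ allowed to be $\mathcal{O}$, since nothing guarantees $\Phi(\mathcal{M}_{d-1},2d-2)\neq\mathcal{O}$ in the lacunary case -- the paper glosses over the same point.
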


An infinite matrix  $\dsty \mathcal{G}=(g_{i,j})_{i,j=0}^{\infty}$  is a \emph{lower Hessenberg infinite matrix}  if $g_{i,j}=0$ whenever $j-i>1$ and  at least one entry   of the $1$-diagonal is different from zero, i.e.
\begin{equation}\label{InfLowerHess}
\mathcal{G}= \left(\begin{array}{cccccc}
               g_{0,0} & g_{0, 1} & 0 &\cdots & 0 &\cdots\\
               g_{1,0} & g_{1,1} & g_{1, 2} &\cdots & 0 &\cdots\\
               \vdots &  \vdots &   \vdots & \ddots &  \vdots  &  \cdots\\
               g_{n,0} & g_{n,1} & g_{n,2} &\cdots & g_{n, n+1} & \cdots\\
               \vdots &  \vdots &  \vdots  & \vdots &  \vdots  &  \ddots
             \end{array}
 \right).
\end{equation}
Let us denote by $\HH$ the set of all lower Hessenberg infinite matrices. Additionally, if  $\mathcal{G} \in \HH$  and  all the entries in the $1$-diagonal are equal to $1$, we say that $\mathcal{G}$ is \emph{monic}. If all the entries of the   $1$-diagonal of  $\mathcal{G}$ are non-zero, we say that $\mathcal{G}$ is a   \emph{non-degenerate lower Hessenberg infinite matrix}  (for brevity hereafter referred as \emph{non-degenerate  Hessenberg  matrix}). An upper Hessenberg  matrix is a matrix whose transpose is  a lower Hessenberg matrix.

For  each  $\eta \in \ZZp$ fixed, we denote by $\Psi( \cdot,\eta)$ the  operator   from $\HH$ to $\MM$ given by the expression
\begin{align}  \label{Psi-operator}
\Psi(\mathcal{B}, \eta)=&\sum_{k=0}^{\eta}(-1)^{k}\, \binom{\eta}{k} \;\mathcal{B}^{k}\; \mathcal{B}^{(\eta-k)\; \tran},\; \mathcal{B} \in \HH.
\end{align}
Theorem \ref{RelaOperadores} establishes the relation between  the operators \eqref{Phi-operator} and  \eqref{Psi-operator}.

Given a  non-degenerate   matrix  $\mathcal{G}\in \HH$, we can generate a sequence of polynomials $\dsty \{Q_n\}_{n=0}^{\infty}$ as follows.  Assume that $Q_{0}(z) = t_{0,0}>0$, then
\begin{align}\label{SobolevRR01}
g_{0, 1} Q_{1}(x)  =& x Q_0(x)-g_{0,0} Q_0(x), \nonumber\\
g_{1, 2} Q_2(x) =& xQ_1(x)-g_{1,1} Q_1(x)-  g_{1,0} Q_0(x),  \nonumber\\
\vdots \qquad \vdots & \qquad \vdots  \nonumber\\
g_{n, n+1} Q_{n+1}(x)   =& x  Q_{n}(x) -    \sum_{k=0}^{n}g_{n,k} \, Q_{k}(x) ,\\
\vdots \qquad \vdots & \qquad \vdots  \nonumber
\end{align}
Hereafter, we will say that $\dsty \{Q_n\}$ is the \emph{sequence of polynomials generated by} $\dsty \mathcal{G}$. As  $\mathcal{G}$ is non-degenerate, $Q_n$ is a polynomial of degree $n$.

Let   $\mathcal{T}$  be the lower triangular  infinite matrix whose entries are the coefficients of the sequence of polynomials $\{Q_n\}$, i.e.
\begin{equation}\label{InfiniteMatForm02}
    \mathcal{Q}(x)= \mathcal{T}\;\mathcal{P}(x) \quad \text{ where } \quad
\mathcal{T}= \left(\begin{array}{ccccc}
               t_{0,0} & 0 & \cdots & 0  &\cdots\\
               t_{1,0} & t_{1, 1} & \cdots & 0  &\cdots\\
              \vdots &  \vdots &   \ddots &  \vdots  &\vdots\\
              t_{n,0} & t_{n,1} &  \cdots & t_{n, n} &\cdots   \\
               \vdots &  \vdots & \cdots &  \vdots  &\ddots
             \end{array} \right),
\end{equation}
$\mathcal{Q}(x)=(Q_0(x),Q_1(x),\cdots,Q_n(x), \cdots)^{\tran}$  and $\mathcal{P}(x)=(1,x,\cdots,x^n, \cdots)^{\tran}$.

As $\mathcal{G}$ is non-degenerate,   $t_{i,i}=t_{0,0}\,\left(\prod_{k=0}^{i-1} g_{k,k+1}\right)^{-1}\neq 0$ for all $i \geq 1$. Therefore,  there exists a  unique lower triangular  infinite matrix  $\mathcal{T}^{-1}$  such that $\mathcal{T}\cdot \mathcal{T}^{-1}=\mathcal{I}$ (c.f. \cite[(2.1.I)]{Coo50}), i.e. $\mathcal{T}$ has a unique right-hand inverse.  Furthermore, in this case $\mathcal{T}^{-1}$  is also a left-hand  inverse of  $\mathcal{T}$  and it is its only two-sided inverse (c.f. \cite[Remark (a) pp. 22]{Coo50}),
\begin{equation}\label{InfiniteMatForm03}
\mathcal{T}^{-1}= \left(\begin{array}{ccccc}
               \tau_{0,0} & 0 & \cdots & 0  &\cdots\\
               \tau_{1,0} & \tau_{1, 1} & \cdots & 0  &\cdots\\
              \vdots &  \vdots &   \ddots &  \vdots  &\vdots\\
              \tau_{n,0} & \tau_{n,1} &  \cdots & \tau_{n, n} &\cdots   \\
               \vdots &  \vdots & \cdots &  \vdots  &\ddots
             \end{array}
 \right), \; \text{where}\quad \tau_{i,i}=t_{i,i}^{-1}.\end{equation}

 We will denote by $\mathcal{M}_{\mathcal{G}}$ the \emph{matrix of formal moments associated with}   $\mathcal{G}$ (a non-de\-ge\-ne\-ra\-te Hessenberg matrix) defined by
\begin{equation}\label{GenMomentMatrix}
\mathcal{M}_{\mathcal{G}}= \left(m_{i,j}\right)_{i,j=0}^{\infty}=\mathcal{T}^{-1}\; \mathcal{T}^{-\tran}.
\end{equation}

We say that a non-degenerate  Hessenberg  matrix  $\mathcal{G}$ is a \emph{Hessenberg-Sobolev matrix of index $d \in {\ZZi}$} if its associated matrix of formal moments is a Hankel-Sobolev matrix of index $d$. In the following theorem, we give a characterization of these matrices.

\begin{theorem} \label{ThCharact-HessenS}
A  non-degenerate  Hessenberg  matrix  $\mathcal{G}$ is a Hessenberg-Sobolev matrix of index $d \in {\ZZp}$, if and only if   \begin{equation*}\label{ThCharact-HessenS-01}
\Psi( \mathcal{G},2d+1)=\mathcal{O} \quad \text{ and } \quad \Psi( \mathcal{G},2d)\neq\mathcal{O}.\end{equation*}
\end{theorem}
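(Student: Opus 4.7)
The strategy is to reduce Theorem \ref{ThCharact-HessenS} to the already-proved Theorem \ref{ThCharact-HankelS} by exhibiting an explicit intertwining identity between $\Psi(\mathcal{G},\eta)$ and $\Phi(\mathcal{M}_\mathcal{G},\eta)$.

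By definition, $\mathcal{G}$ is Hessenberg-Sobolev of index $d$ precisely when $\mathcal{M}_\mathcal{G}=\mathcal{T}^{-1}\mathcal{T}^{-\tran}$ is Hankel-Sobolev of index $d$. Since $\mathcal{M}_\mathcal{G}$ is manifestly symmetric, Theorem \ref{ThCharact-HankelS} reformulates this as the pair of conditions $\Phi(\mathcal{M}_\mathcal{G},2d+1)=\mathcal{O}$ and $\Phi(\mathcal{M}_\mathcal{G},2d)\neq\mathcal{O}$. Thus it is enough to show that, for every $\eta\in\ZZp$, one has $\Psi(\mathcal{G},\eta)=\mathcal{O}$ if and only if $\Phi(\mathcal{M}_\mathcal{G},\eta)=\mathcal{O}$.

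To produce this equivalence, I would first recast the recurrence \eqref{SobolevRR01} in the matrix form $\mathcal{G}\,\mathcal{Q}(x)=x\,\mathcal{Q}(x)$; combining with $\mathcal{Q}(x)=\mathcal{T}\,\mathcal{P}(x)$ and the obvious $x\,\mathcal{P}(x)=\mathcal{U}\,\mathcal{P}(x)$ gives the factorization
\begin{equation*}
\mathcal{G}=\mathcal{T}\,\mathcal{U}\,\mathcal{T}^{-1},\qquad \mathcal{G}^{k}=\mathcal{T}\,\mathcal{U}^{k}\,\mathcal{T}^{-1},\qquad \mathcal{G}^{(\eta-k)\tran}=\mathcal{T}^{-\tran}\,\mathcal{U}^{-(\eta-k)}\,\mathcal{T}^{\tran}.
\end{equation*}
Plugging these into \eqref{Psi-operator}, the inner factors $\mathcal{T}^{-1}\mathcal{T}^{-\tran}=\mathcal{M}_\mathcal{G}$ appear, and after reindexing $j=\eta-k$ (using $\binom{\eta}{\eta-j}=\binom{\eta}{j}$) the binomial sum collapses to
\begin{equation*}
\Psi(\mathcal{G},\eta)=(-1)^{\eta}\,\mathcal{T}\,\Phi(\mathcal{M}_\mathcal{G},\eta)\,\mathcal{T}^{\tran}.
\end{equation*}
This is, in essence, the intertwining provided by Theorem \ref{RelaOperadores}.

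Because $\mathcal{T}$ is lower triangular with nonzero diagonal, it admits a two-sided inverse \eqref{InfiniteMatForm03}, so $X\mapsto\mathcal{T}\,X\,\mathcal{T}^{\tran}$ is a linear bijection of $\MM$ and, in particular, sends zero only to zero. Hence $\Psi(\mathcal{G},\eta)=\mathcal{O}\iff\Phi(\mathcal{M}_\mathcal{G},\eta)=\mathcal{O}$, and plugging in $\eta=2d+1$ and $\eta=2d$ converts the characterization of Theorem \ref{ThCharact-HankelS} for $\mathcal{M}_\mathcal{G}$ into the desired characterization for $\mathcal{G}$. The main delicacy will be carrying out the reindexing without ever invoking the spurious relation $\mathcal{U}^{-1}\mathcal{U}=\mathcal{I}$ (only $\mathcal{U}\,\mathcal{U}^{-1}=\mathcal{I}$ is available): fortunately, in the expression $\mathcal{U}^{k}\,\mathcal{M}_\mathcal{G}\,\mathcal{U}^{-(\eta-k)}$ the positive and negative powers of $\mathcal{U}$ are separated by $\mathcal{M}_\mathcal{G}$, so no illegal cancellation is ever needed and the identity goes through as a purely formal manipulation.
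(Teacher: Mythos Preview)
Your proposal is correct and follows essentially the same approach as the paper: the paper's proof consists of the single sentence that Theorem~\ref{ThCharact-HessenS} is an immediate consequence of Theorem~\ref{ThCharact-HankelS} together with Theorem~\ref{RelaOperadores}, and your argument is exactly this, with the intertwining identity $\Psi(\mathcal{G},\eta)=(-1)^{\eta}\,\mathcal{T}\,\Phi(\mathcal{M}_\mathcal{G},\eta)\,\mathcal{T}^{\tran}$ (i.e.\ Theorem~\ref{RelaOperadores}) spelled out in detail and the bijectivity of $X\mapsto\mathcal{T}X\mathcal{T}^{\tran}$ made explicit. Your observation that the reindexing never requires the false identity $\mathcal{U}^{-1}\mathcal{U}=\mathcal{I}$ is a welcome clarification that the paper leaves implicit.
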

 The proof of this theorem is an immediate consequence of Theorem  \ref{ThCharact-HankelS} and  Theorem \ref{RelaOperadores} (stated in section \ref{Sect-Hessenberg}).

Let $\PP$ be   the linear space of polynomials with real coefficients, $\mathcal{G}$ be a non-de\-ge\-ne\-ra\-te  Hessenberg  matrix  and $\mathcal{M}_{\mathcal{G}}$  its associated matrix of formal moments.  If  $p(x)=\sum_{i=0} ^{n_1} a_i x^i$ and  $q(x)=\sum_{j=0} ^{n_2} b_j x^j$ are two  polynomials in  $\PP$ of degree $n_1$  and $n_2$ respectively. Then the bilinear form
           \begin{eqnarray}\label{MatInnerProd}
\langle p,q \rangle_{\mathcal{G}}=  (a_0,\cdots,a_{n_1},0,\cdots) \mathcal{M}_{\mathcal{G}} (b_0,\cdots,b_{n_2},0,\cdots)^{\tran} = \sum_{i=0} ^{n_1} \sum _{j=0} ^{n_2} a_i \,m_{i,j}\,{b_j};
     \end{eqnarray} defines an inner product on $\PP$ and $\|\cdot \|_{\mathcal{G}}=\sqrt{\langle \cdot ,\cdot  \rangle_{\mathcal{G}}}$  is  the norm induced by \eqref{MatInnerProd} on $\PP$ (c.f. Theorem \ref{Th-FormalOP}).

Let $d\in\ZZp$ and $\vec{\mathbf{\mu}}_d= (\mu_0,\mu_1,\dots,\mu_d)$ be a vector of $d+1$ measures, we write $\vec{\mathbf{\mu}}_d \in \mathfrak{M}_{d}(\RR)$ if for each $k$ ($0\leq k \leq d$) the measure $\mu_k$ is a non-negative finite Borel measure with support $\Delta_k \subset \RR$, $\PP\subset \mathbf{L}_1\left(\mu_k\right)$, $\mu_0$ is positive and $\Delta_0$ contains infinite points. If  $d=\infty$ and  $\vec{\mathbf{\mu}}_{\infty}$  is a sequence of  measures that satisfy the above conditions, we write $\vec{\mathbf{\mu}}_{\infty}\in \mathfrak{M}_{\infty}(\RR)$.

For $d\in\ZZi$ and   $\vec{\mathbf{\mu}}_d \in \mathfrak{M}_{d}(\RR)$ , we define  on $\PP$ the Sobolev inner product
\begin{equation}
\label{Sob-InnerP}
\langle f, g \rangle_{\vec{\mathbf{\mu}}_d} = \sum_{k=0}^{d} \int f^{(k)}(x) g^{(k)}(x) d\mu_{k}(x) = \sum_{k=0}^{d} \langle f^{(k)}, g^{(k)}\rangle_{\mu_k}\;,
\end{equation}
where $f,g \in \PP$ and $f^{(k)}$ denote  the $k$th derivative of  $f$. The symbol $\|\cdot\|_{\vec{\mathbf{\mu}}_d}=\sqrt{\langle \cdot, \cdot \rangle_{\vec{\mathbf{\mu}}_d}}$ denotes the Sobolev norm associated with \eqref{Sob-InnerP}.
Note that although   $d$ is usually considered a non-negative  integer (c.f. \cite{MaXu15}), the case $d=\infty$  has sense  on  $\PP$. If all the measures $\mu_k$ involved in \eqref{Sob-InnerP} are positive, we say that the Sobolev inner product is \emph{non-lacunary} and   \emph{lacunary} in any other case.

Taking into account the nature of the support of the measures involved in \eqref{Sob-InnerP}, we have the following three cases:
\begin{description}
  \item[Continuous case.]  The measures $\mu_0, \cdots, \mu_d$ are supported on   infinite subsets.
  \item[Discrete case.]  The support of the measure $\mu_0$ is an infinite subset and the measures $\mu_1, \cdots, \mu_d$ are supported on finite subsets.
\item[Discrete-continuous case.]  The support of the measure $\mu_d$ is an infinite subset and the measures $\mu_0, \cdots, \mu_{d-1}$ are supported on finite subsets.
\end{description}

The notion of Sobolev moment and several related  topics  were  firstly introduced in  \cite{BaLoPi99}.   The \emph{$(n,k)$-moment associated with the inner product } \eqref{Sob-InnerP}   is defined as $s_{n,k}=\langle x^n,x^k\rangle_{\vec{\mathbf{\mu}}_d}$ ($n,k\geq 0$), provided the integral exists.   In \cite{BaLoPi99}, it was proved  that  the  infinite matrix of moments $\mathcal{S}$ with entries $s_{n,k}$, ($n,k\geq 0$) is a Hankel-Sobolev matrix  (c.f. \cite{BaLoPi99} and Subsection \ref{Sec-S-MomentP} of this paper). Furthermore, if ${Q_n}$ is the sequence of orthonormal polynomials with respect to \eqref{Sob-InnerP} with leading coefficient $c_{n} >0$, then the infinite matrix $\mathcal{G}_{\vec{\mathbf{\mu}}_d}$ with entries $g_{i,j}=\langle xQ_i, Q_j \rangle_{\vec{\mathbf{\mu}}_d} $  is a   non-degenerate  Hessenberg  matrix. In this case,  the sequence of orthonormal polynomials ${Q_n}$ is the sequence of polynomials generated by $\mathcal{G}_{\vec{\mathbf{\mu}}_d}$.

The following theorem gives a characterization of  the non-degenerate  Hessenberg  matrices whose  sequence of generated polynomials is ortogonal with respect to a So\-bo\-lev inner product as \eqref{Sob-InnerP}.

\begin{theorem} [Favard  type theorem for continuous case] \label{ThFavardSobolev} Let $\mathcal{G}$ be a non-degenerate  Hessenberg  matrix. Then, there exists  $d \in \ZZp$ and $\vec{\mathbf{\mu}}_d \in \mathfrak{M}_{d}(\RR)$  such that $ \langle p, q \rangle_{\vec{\mathbf{\mu}}_d} =\langle p, q \rangle_{\mathcal{G}} $ if and only if \;
\begin{enumerate}
  \item $\mathcal{G}$ is a Hessenberg-Sobolev matrix of index $d \in  \ZZp$.
  \item For each  $k=0,\,1,\dots,\;d$; the Hankel matrix    $\mathcal{H}_{d-k}$  defined by  \eqref{ThCharact-02}, is a positive definite matrix of infinite order.
\end{enumerate}
\end{theorem}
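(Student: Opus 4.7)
The plan is to reduce the statement to the classical Hamburger moment theorem applied componentwise to the canonical Hankel decomposition of $\mathcal{M}_{\mathcal{G}}$ provided by Theorem \ref{ThCharact-HankelS}. The computational lemma driving both directions is the identification of each building block of \eqref{Hankel-Sobolev} with a Sobolev pairing: if $\mathcal{H}_k=\han{c^{(k)}_\ell}$, then the $(n,m)$ entry of $\mathcal{U}^{-k}\mathcal{D}_k\mathcal{H}_k\mathcal{D}_k\mathcal{U}^{k}$ equals $\frac{n!}{(n-k)!}\frac{m!}{(m-k)!}\,c^{(k)}_{n+m-2k}$ for $n,m\geq k$ and vanishes otherwise; this matches $\int (x^n)^{(k)}(x^m)^{(k)}\,d\mu_k$ whenever $c^{(k)}_\ell = \int x^\ell\,d\mu_k$, so summing over $k$ connects the Hankel-Sobolev decomposition of $\mathcal{M}_{\mathcal{G}}$ with the inner product \eqref{Sob-InnerP}.

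For the sufficiency direction, assuming (1) and (2), Theorem \ref{ThCharact-HankelS} gives the decomposition
$$\mathcal{M}_{\mathcal{G}} = \sum_{k=0}^{d}\mathcal{U}^{-k}\mathcal{D}_k\mathcal{H}_k\mathcal{D}_k\mathcal{U}^{k},$$
with the Hankel components $\mathcal{H}_k$ supplied explicitly by \eqref{ThCharact-02}. Condition (2) makes each $\mathcal{H}_k$ a positive definite Hankel matrix of infinite order, so Hamburger's theorem furnishes, for each $k$, a positive finite Borel measure $\mu_k$ on $\RR$ with infinite support whose moments are precisely the entries of $\mathcal{H}_k$. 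Setting $\vec{\mathbf{\mu}}_d=(\mu_0,\ldots,\mu_d)\in\mathfrak{M}_d(\RR)$, the computational lemma together with bilinearity shows $\langle x^n,x^m\rangle_{\vec{\mathbf{\mu}}_d}$ equals the $(n,m)$ entry of $\mathcal{M}_{\mathcal{G}}$, so $\langle p,q\rangle_{\vec{\mathbf{\mu}}_d}=\langle p,q\rangle_{\mathcal{G}}$ for all $p,q\in\PP$.

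For necessity, suppose the Sobolev representation $\langle \cdot,\cdot\rangle_{\vec{\mathbf{\mu}}_d}=\langle \cdot,\cdot\rangle_{\mathcal{G}}$ holds. Evaluating at monomials yields $\mathcal{M}_{\mathcal{G}} = \mathcal{S}$, the Sobolev moment matrix of $\vec{\mathbf{\mu}}_d$. The computational lemma read in reverse exhibits $\mathcal{S}$ as a Hankel-Sobolev matrix with components $\mathcal{H}_k=\han{\int x^\ell\,d\mu_k}$; in the continuous case $\mathcal{H}_d\neq\mathcal{O}$ while $\mathcal{H}_k=\mathcal{O}$ for $k>d$ (no such $\mu_k$ exists), so the index equals exactly $d$, proving (1). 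For (2), every $\mu_k$ with $0\leq k\leq d$ is positive with infinite support by the continuous case hypothesis, hence the elementary half of Hamburger's theorem renders each $\mathcal{H}_{d-k}$ positive definite of infinite order.

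The principal technical issue is the bookkeeping behind the computational lemma: one must verify that $\mathcal{U}^{-k}$ and $\mathcal{U}^{k}$ shift matrix indices by $k$ in opposite directions, that $\mathcal{D}_k$ contributes precisely the factorials $n!/(n-k)!$ produced by $k$-fold differentiation of $x^n$, and, crucially, that the Hankel components $\mathcal{H}_k$ extracted from $\mathcal{M}_{\mathcal{G}}$ are the unique matrices occurring in \eqref{Hankel-Sobolev}, so that the measures produced componentwise by Hamburger actually assemble into a single well-defined Sobolev inner product. Uniqueness is delivered by the explicit inversion formula \eqref{ThCharact-02} of Theorem \ref{ThCharact-HankelS}.
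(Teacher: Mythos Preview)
Your argument is correct and follows essentially the same route as the paper: reduce the equality of inner products to the identity $\mathcal{S}=\mathcal{M}_{\mathcal{G}}$, use the Hankel--Sobolev decomposition of $\mathcal{M}_{\mathcal{G}}$ (Theorem~\ref{ThCharact-HankelS}), and then apply Hamburger's criterion (Lemma~\ref{SchoTam-lemma}) componentwise to pass between the Hankel blocks $\mathcal{H}_k$ and the measures $\mu_k$. The only cosmetic difference is that the paper packages your ``computational lemma'' and the componentwise moment argument into the citation of Theorem~\ref{Th-hp1} (proved in \cite{BaLoPi99}), whereas you unpack that step explicitly; the underlying logic and the use of uniqueness (Proposition~\ref{Uniqueness-HankelS} / formula~\eqref{ThCharact-02}) are the same.
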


The  Favard  type theorems for the cases discrete and the discrete-continuous   are   Theorems \ref{ThFavardSobolevDiscrete} and \ref{ThFavardSobolevDiscreteCont}, respectively. Some basic aspects about the classical moment problem and the Sobolev moment problem are revisited  in subsection \ref{Sec-S-MomentP}.

In Section \ref{SecMatrixOper}, we proceed with the study of  the properties of the matrix operator $\Phi( \cdot,\eta)$, the Hankel-Sobolev matrices and the proof of Theorem \ref{ThCharact-HankelS}. We revisit the Sobolev moment problem  in  subsection \ref{Sec-S-MomentP}. The third section  is devoted to  study  the properties of the bilinear form \eqref{MatInnerProd}  and the nexus between the operators  $\Phi( \cdot,\eta)$ and $\Psi( \cdot,\eta)$. In the last section, we prove the extension of the Favard Theorem for Sobolev orthogonality stated in Theorem \ref{ThFavardSobolev}.

\section{Hankel-Sobolev matrices}\label{SecMatrixOper}

First of all, we need to prove that the notion of a Hankel-Sobolev matrix introduced in \eqref{Hankel-Sobolev}  is well-defined.

\begin{proposition} \label{Uniqueness-HankelS} Let  $\mathcal{M}$ be a Hankel-Sobolev matrix, then the decomposition of $\mathcal{M}$  established in \eqref{Hankel-Sobolev} is unique.
\end{proposition}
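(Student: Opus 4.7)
The plan is to prove uniqueness by peeling off the Hankel summands one at a time, starting from $\mathcal{H}_0$. The key observation is a shape remark: for every $k \geq 1$, the matrix $\mathcal{U}^{-k}\mathcal{D}_k \mathcal{H}_k \mathcal{D}_k \mathcal{U}^{k}$ has its first $k$ rows and its first $k$ columns equal to zero. Indeed, right-multiplication by $\mathcal{U}^k$ shifts the columns $k$ places to the right (inserting $k$ zero columns on the left), while left-multiplication by $\mathcal{U}^{-k}$ shifts the rows $k$ places down (inserting $k$ zero rows on top). This already guarantees that each entry $m_{i,j}$ of $\mathcal{M}$ is a finite sum, namely only the summands with $k \leq \min(i,j)$ contribute.

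A direct computation then gives, for $j \geq k$,
\begin{equation*}
\left(\mathcal{U}^{-k}\mathcal{D}_k \mathcal{H}_k \mathcal{D}_k \mathcal{U}^{k}\right)_{k,j}=\frac{k!}{0!}\, h^{(k)}_{0,j-k}\,\frac{j!}{(j-k)!},
\end{equation*}
where $h^{(k)}_{a,b}$ denotes the $(a,b)$ entry of $\mathcal{H}_k$. Since $\mathcal{H}_k$ is a Hankel matrix, $h^{(k)}_{a,b}=h^{(k)}_{0,a+b}$, so the first row $\bigl(h^{(k)}_{0,0},h^{(k)}_{0,1},h^{(k)}_{0,2},\dots\bigr)$ completely determines $\mathcal{H}_k$. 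The formula above shows that this first row is recoverable from the row $k$ of $\mathcal{U}^{-k}\mathcal{D}_k \mathcal{H}_k \mathcal{D}_k \mathcal{U}^{k}$ by dividing by non-zero factors.

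The induction is then immediate. For the base case $k=0$ the shape remark says that the $0$-th row of $\mathcal{M}$ equals the $0$-th row of $\mathcal{H}_0$, which determines $\mathcal{H}_0$ uniquely. For the inductive step, suppose two decompositions $\sum_k \mathcal{U}^{-k}\mathcal{D}_k \mathcal{H}_k \mathcal{D}_k \mathcal{U}^{k}$ and $\sum_k \mathcal{U}^{-k}\mathcal{D}_k \widetilde{\mathcal{H}}_k \mathcal{D}_k \mathcal{U}^{k}$ of the same matrix $\mathcal{M}$ satisfy $\mathcal{H}_\ell=\widetilde{\mathcal{H}}_\ell$ for all $\ell<k$; subtracting the common part, the resulting identity has zero rows $0,\dots,k-1$, and by the shape remark its row $k$ sees only $\mathcal{H}_k$ (resp. $\widetilde{\mathcal{H}}_k$). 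The displayed formula therefore forces the first rows of $\mathcal{H}_k$ and $\widetilde{\mathcal{H}}_k$ to coincide, hence $\mathcal{H}_k=\widetilde{\mathcal{H}}_k$.

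I do not foresee a genuine obstacle: the whole argument reduces to entrywise bookkeeping with the shift matrices. The only point requiring a brief remark is that the argument works uniformly whether $d$ is finite or $d=\infty$, because for each individual index $k$ the inductive step involves only finitely many previously determined $\mathcal{H}_\ell$ with $\ell<k$, and the equality of the row $k$ entries is an equality of real numbers, not of formal series.
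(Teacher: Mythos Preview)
Your argument is correct and follows essentially the same route as the paper: both proofs use the observation that $\mathcal{U}^{-k}\mathcal{A}\mathcal{U}^{k}$ has its first $k$ rows and columns equal to zero, and then peel off the Hankel blocks inductively by noting that a Hankel matrix is determined by its first row. The paper's version is simply more compressed, phrasing the induction as the single remark that each difference $\mathcal{H}_k-\widehat{\mathcal{H}}_k$ must be a Hankel matrix with vanishing first row, whereas you spell out the entrywise bookkeeping explicitly.
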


\begin{proof} We first recall that for each $k \in \ZZp$,  $\mathcal{D}_k$ is a diagonal matrix with positive entries in the main diagonal. Furthermore, if $\mathcal{A}$ is an infinite matrix and $k\in \ZZp$ is fixed, the matrix $\left(\mathcal{U}^{-k}\; \mathcal{A} \;\mathcal{U}^{k}\right)$ is obtained adding to $\mathcal{A}$  the first $k$ rows and columns of zeros.

Suppose there are two sequences of Hankel matrices,  $\left\{\mathcal{H}_k\right\}_{k=0}^{\infty}$ and  $\left\{\widehat{\mathcal{H}}_k\right\}_{k=0}^{\infty}$,  such that
$$
 \mathcal{M}=   \sum_{k=0}^{\infty} \left(\mathcal{U}^{-k}\;\mathcal{D}_k \;  \mathcal{H}_{k} \;  \mathcal{D}_k\;\mathcal{U}^{k}\right) \quad \text{ and }\quad
 \mathcal{M}=     \sum_{k=0}^{\infty} \left(\mathcal{U}^{-k}\;\mathcal{D}_k \;  \widehat{\mathcal{H}}_{k} \;  \mathcal{D}_k\;\mathcal{U}^{k}\right).
$$
Therefore,
$$\sum_{k=0}^{\infty} \left(\mathcal{U}^{-k}\;\mathcal{D}_k \;   \left(\mathcal{H}_{k} -\widehat{\mathcal{H}}_{k} \right)\;  \mathcal{D}_k\;\mathcal{U}^{k}\right)=\mathcal{O}.$$
Hence,  for each  $k\in \ZZp$ fixed, the matrix $\left(\mathcal{H}_{k} -\widehat{\mathcal{H}}_{k} \right)$ is a Hankel matrix whose first row has all its entries equal to zero, i.e. $ \mathcal{H}_{k} =\widehat{\mathcal{H}}_{k}$, which  completes the proof.
\end{proof}

Obviously, the matrix operator $\Phi( \cdot,\eta)$ defined in \eqref{Phi-operator} is linear. Before  proving   Theorem \ref{ThCharact-HankelS}, we need to study some other properties of this operator and some  auxiliary results.

\begin{proposition}[Recurrence] \label{RecF-1}   Let $\eta \in \ZZp$ fixed and  $\mathcal{A} \in \MM$ ,  then
\begin{equation}\label{RecuRelat}
 \Phi( \mathcal{A},\eta+1)=\mathcal{U}\,\Phi( \mathcal{A},\eta)-\Phi( \mathcal{A},\eta) \,\mathcal{U}^{-1} .
\end{equation}
\end{proposition}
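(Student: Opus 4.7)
The plan is a direct algebraic manipulation using Pascal's identity $\binom{\eta+1}{\ell}=\binom{\eta}{\ell}+\binom{\eta}{\ell-1}$ (with the convention $\binom{\eta}{-1}=\binom{\eta}{\eta+1}=0$). The only real subtlety is checking that all the infinite matrix products involved are well defined in the sense of the paper, so that associativity can be invoked freely.

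First I would note that multiplying by $\mathcal{U}$ on the left shifts rows of any matrix up by one, and multiplying on the right by $\mathcal{U}^{-1}=\mathcal{U}^{\tran}$ shifts columns to the right by one; in both cases each entry of the product is a single entry of the original matrix, so the sums defining these products are trivially finite, and the same holds for iterated shift multiplications. Hence $\mathcal{U}^{m}\mathcal{A}\mathcal{U}^{-n}$ is well defined for all $m,n\in\ZZp$ and associativity holds among such factors. This justifies writing $\mathcal{U}\cdot(\mathcal{U}^{\eta-\ell}\mathcal{A}\mathcal{U}^{-\ell})=\mathcal{U}^{\eta+1-\ell}\mathcal{A}\mathcal{U}^{-\ell}$ and $(\mathcal{U}^{\eta-\ell}\mathcal{A}\mathcal{U}^{-\ell})\cdot\mathcal{U}^{-1}=\mathcal{U}^{\eta-\ell}\mathcal{A}\mathcal{U}^{-\ell-1}$ term by term.

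Next I would expand the right-hand side of \eqref{RecuRelat} using the definition \eqref{Phi-operator}:
\begin{align*}
\mathcal{U}\,\Phi(\mathcal{A},\eta)&=\sum_{\ell=0}^{\eta}(-1)^{\ell}\binom{\eta}{\ell}\mathcal{U}^{\eta+1-\ell}\mathcal{A}\mathcal{U}^{-\ell},\\
\Phi(\mathcal{A},\eta)\,\mathcal{U}^{-1}&=\sum_{\ell=0}^{\eta}(-1)^{\ell}\binom{\eta}{\ell}\mathcal{U}^{\eta-\ell}\mathcal{A}\mathcal{U}^{-\ell-1}.
\end{align*}
A reindexing $\ell\mapsto\ell-1$ in the second sum turns it into $\sum_{\ell=1}^{\eta+1}(-1)^{\ell-1}\binom{\eta}{\ell-1}\mathcal{U}^{\eta+1-\ell}\mathcal{A}\mathcal{U}^{-\ell}$. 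Subtracting and collecting the terms for $\ell=0,1,\ldots,\eta+1$ under the boundary convention above yields a single sum with coefficient $(-1)^{\ell}\bigl[\binom{\eta}{\ell}+\binom{\eta}{\ell-1}\bigr]$.

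Finally, applying Pascal's identity converts this coefficient into $(-1)^{\ell}\binom{\eta+1}{\ell}$, which is exactly the coefficient in the definition of $\Phi(\mathcal{A},\eta+1)$, proving \eqref{RecuRelat}. I do not expect any serious obstacle; the calculation is essentially the standard derivation of the finite-difference operator identity $\Delta^{\eta+1}=\Delta\circ\Delta^{\eta}$, transcribed into the noncommutative setting where the ``left shift'' $\mathcal{U}\cdot$ and ``right shift'' $\cdot\,\mathcal{U}^{-1}$ play the roles of the two translation operators. The only thing to watch is the bookkeeping of the empty end-terms in the reindexing, which the binomial convention handles cleanly.
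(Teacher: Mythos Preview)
Your proof is correct and follows essentially the same approach as the paper's: both arguments are the direct computation based on Pascal's identity, the only cosmetic difference being that the paper starts from $\Phi(\mathcal{A},\eta+1)$ and splits it into the two pieces, while you start from the right-hand side and combine them. Your extra remark on the well-definedness of the shift products is a nice addition but not needed beyond what the paper already stipulates in the introduction.
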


\begin{proof}
\begin{align*}
 \Phi( \mathcal{A},\eta+1)  = &   \mathcal{U}^{\eta+1} \mathcal{A} + \left( \sum_{\ell=1}^{\eta} (-1)^\ell  \binom{\eta+1}{\ell} \mathcal{U}^{\eta+1-\ell} \mathcal{A} \mathcal{U}^{-\ell} \right) +   (-1)^{\eta+1}  \mathcal{A} \mathcal{U}^{-\eta-1}\\
  = &  \mathcal{U}^{\eta+1} \mathcal{A} + \left( \sum_{\ell=1}^{\eta} (-1)^\ell \left( \binom{\eta}{\ell-1}+ \binom{\eta}{\ell}\right)  \mathcal{U}^{\eta+1-\ell}  \mathcal{A} \mathcal{U}^{-\ell} \right)    +   (-1)^{\eta+1}  \mathcal{A} \mathcal{U}^{-\eta-1}\\
   =&  \sum_{\ell=0}^{\eta} (-1)^\ell \binom{\eta}{\ell}  \mathcal{U}^{\eta+1-\ell}\mathcal{A} \mathcal{U}^{-\ell} +\sum_{\ell=1}^{\eta+1} (-1)^{\ell} \binom{\eta}{\ell-1}  \mathcal{U}^{\eta+1-\ell} \mathcal{A} \mathcal{U}^{-\ell}\\  =&  \mathcal{U} \left( \sum_{\ell=0}^{\eta} (-1)^\ell \binom{\eta}{\ell}  \mathcal{U}^{\eta-\ell} \mathcal{A} \mathcal{U}^{-\ell} \right) -\left(\sum_{\ell=0}^{\eta} (-1)^\ell \binom{\eta}{\ell}  \mathcal{U}^{\eta-\ell} \mathcal{A} \mathcal{U}^{-\ell}\right) \mathcal{U}^{-1}\\
   =& \mathcal{U} \, \Phi( \mathcal{A},\eta)-\Phi( \mathcal{A},\eta) \mathcal{U}^{-1}
\end{align*}
\end{proof}

The following proposition is an immediate consequence of   Proposition   \ref{RecF-1} and \eqref{HankelCond}.

\begin{proposition} \label{PropHankel}
 If for a matrix $\mathcal{A} \in \MM$,  there exists $\eta \in \ZZp$ such that $\Phi( \mathcal{A},\eta)=\mathcal{O}$, then
   \begin{enumerate}
     \item[(a)]  $\Phi( \mathcal{A},\eta_1)=\mathcal{O}$ for all $\eta_1 \geq \eta.$
     \item[(b)]   For all $c\in \RR$ and $\eta \geq 1$ the matrix $c\,\Phi( \mathcal{A},\eta-1)$ is a Hankel matrix.
   \end{enumerate}
\end{proposition}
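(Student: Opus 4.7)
The plan is to derive both parts as immediate consequences of the recurrence $\Phi(\mathcal{A},\eta+1)=\mathcal{U}\,\Phi(\mathcal{A},\eta)-\Phi(\mathcal{A},\eta)\,\mathcal{U}^{-1}$ established in Proposition \ref{RecF-1}, combined with the Hankel characterization \eqref{HankelCond}. Nothing deeper than these two facts is needed.

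For part (a), I would argue by induction on $\eta_1\geq\eta$. The base case $\eta_1=\eta$ is exactly the hypothesis. For the inductive step, assuming $\Phi(\mathcal{A},\eta_1)=\mathcal{O}$, the recurrence gives
$$\Phi(\mathcal{A},\eta_1+1)=\mathcal{U}\cdot\mathcal{O}-\mathcal{O}\cdot\mathcal{U}^{-1}=\mathcal{O},$$
which closes the induction.

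For part (b), I would apply the recurrence at level $\eta-1$ to obtain
$$\Phi(\mathcal{A},\eta)=\mathcal{U}\,\Phi(\mathcal{A},\eta-1)-\Phi(\mathcal{A},\eta-1)\,\mathcal{U}^{-1}.$$
Because the left-hand side vanishes by hypothesis, $\Phi(\mathcal{A},\eta-1)$ satisfies precisely the relation \eqref{HankelCond} that characterizes Hankel matrices, so $\Phi(\mathcal{A},\eta-1)$ is Hankel. The Hankel condition is linear in the matrix entry, so scaling by any $c\in\RR$ preserves it, and $c\,\Phi(\mathcal{A},\eta-1)$ is Hankel as claimed.

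There is essentially no obstacle here; the proposition is little more than a re-reading of Proposition \ref{RecF-1}. The only subtle point worth checking is that all the matrix products in the recurrence fall under the finitary convention adopted in the introduction (each entry of every product is a finite sum), which is automatic since $\mathcal{U}$ and $\mathcal{U}^{-1}=\mathcal{U}^{\tran}$ have exactly one nonzero entry per row and column, so substituting $\mathcal{O}$ into the recurrence and multiplying through by scalars are entirely legitimate.
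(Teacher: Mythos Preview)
Your proposal is correct and mirrors exactly what the paper does: it states that the proposition is an immediate consequence of Proposition~\ref{RecF-1} and \eqref{HankelCond}, without giving further details. Your inductive argument for (a) and the direct application of the recurrence plus \eqref{HankelCond} for (b) simply spell out that immediate consequence.
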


\begin{proposition} \label{lemma-symmetric} Assume that $\mathcal{A}\in \MM$ is a symmetric matrix, then  $\Phi( \mathcal{A},\eta)$ is a symmetric (antisymmetric) matrix if and only  if $\eta$ is an even (odd)  integer number.
\end{proposition}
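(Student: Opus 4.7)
The plan is to reduce everything to the single identity
\[
\Phi(\mathcal{A},\eta)^{\tran} = (-1)^{\eta}\,\Phi(\mathcal{A},\eta),
\]
which immediately implies the two implications of the statement: $\eta$ even forces $\Phi(\mathcal{A},\eta)^{\tran}=\Phi(\mathcal{A},\eta)$ (symmetric), while $\eta$ odd forces $\Phi(\mathcal{A},\eta)^{\tran}=-\Phi(\mathcal{A},\eta)$ (antisymmetric).

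To prove this identity, I would transpose the defining sum \eqref{Phi-operator} term by term. The only facts I need are the symmetry hypothesis $\mathcal{A}^{\tran}=\mathcal{A}$ together with the transposition rule for the shift, namely $\mathcal{U}^{\tran}=\mathcal{U}^{-1}$, which (by iterating and taking transposes again) yields $(\mathcal{U}^{k})^{\tran}=\mathcal{U}^{-k}$ and $(\mathcal{U}^{-k})^{\tran}=\mathcal{U}^{k}$ for every $k\in\ZZp$. Applying these to each summand gives
\[
\bigl(\mathcal{U}^{\eta-\ell}\,\mathcal{A}\,\mathcal{U}^{-\ell}\bigr)^{\tran}
=\mathcal{U}^{\ell}\,\mathcal{A}\,\mathcal{U}^{-(\eta-\ell)},
\]
so that
\[
\Phi(\mathcal{A},\eta)^{\tran}=\sum_{\ell=0}^{\eta}(-1)^{\ell}\binom{\eta}{\ell}\mathcal{U}^{\ell}\,\mathcal{A}\,\mathcal{U}^{-(\eta-\ell)}.
\]
Reindexing via $k=\eta-\ell$ and using $\binom{\eta}{\eta-k}=\binom{\eta}{k}$ collapses the right-hand side into
\[
\sum_{k=0}^{\eta}(-1)^{\eta-k}\binom{\eta}{k}\mathcal{U}^{\eta-k}\,\mathcal{A}\,\mathcal{U}^{-k}
=(-1)^{\eta}\,\Phi(\mathcal{A},\eta),
\]
which is the identity we want.

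For the converses (``only if'' parts): if $\Phi(\mathcal{A},\eta)$ is symmetric and $\eta$ were odd, then combining $\Phi^{\tran}=\Phi$ with $\Phi^{\tran}=-\Phi$ forces $\Phi(\mathcal{A},\eta)=\mathcal{O}$, which is vacuously both symmetric and antisymmetric; hence, under the tacit assumption $\Phi(\mathcal{A},\eta)\neq\mathcal{O}$, the parity of $\eta$ is pinned down by the sign in the identity above. There is no real obstacle here beyond bookkeeping the transposes of shift powers correctly; the main thing to get right is the equality $(\mathcal{U}^{-k})^{\tran}=\mathcal{U}^{k}$, which is an immediate consequence of $\mathcal{U}^{-1}=\mathcal{U}^{\tran}$ together with the ``one-sided inverse'' framework from \cite{Coo50} recalled in the introduction, so that no ambiguity in the associativity of infinite products arises in this finite sum.
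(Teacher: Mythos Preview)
Your proof is correct and follows essentially the same approach as the paper: transpose the defining sum term by term using $\mathcal{U}^{\tran}=\mathcal{U}^{-1}$ and $\mathcal{A}^{\tran}=\mathcal{A}$, then reindex via $k=\eta-\ell$ to obtain $\Phi(\mathcal{A},\eta)^{\tran}=(-1)^{\eta}\Phi(\mathcal{A},\eta)$. Your extra remark on the ``only if'' direction (the degenerate case $\Phi(\mathcal{A},\eta)=\mathcal{O}$, which is simultaneously symmetric and antisymmetric) is a point the paper's own proof passes over in silence.
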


\begin{proof}
\begin{align*}
\left(\Phi( \mathcal{A},\eta)\right)^{\tran}=&\sum_{\ell=0}^{\eta} (-1)^\ell \binom{\eta}{\ell}  \left(\mathcal{U}^{\eta-\ell} \mathcal{A} \mathcal{U}^{-\ell} \right)^{\tran}= \sum_{\ell=0}^{\eta} (-1)^\ell \binom{\eta}{\eta-\ell}  \mathcal{U}^{\ell}  \mathcal{A} \mathcal{U}^{\ell-\eta} \\
= & \sum_{\ell=0}^{\eta} (-1)^{\eta-\ell} \binom{\eta}{\ell}  \mathcal{U}^{\eta-\ell}  \mathcal{A} \mathcal{U}^{-\ell}=  (-1)^{\eta}  \sum_{\ell=0}^{\eta} (-1)^{\ell} \binom{\eta}{\ell}  \mathcal{U}^{\eta-\ell}  \mathcal{A} \mathcal{U}^{-\ell} \\ = & (-1)^{\eta}  \Phi( \mathcal{A},\eta).\\
\end{align*}
\end{proof}

\begin{theorem} \label{Th-Suff}  Let $d\in \ZZi$  and $\mathcal{M}$ be a Hankel-Sobolev matrix of index $d$, as in  \eqref{Hankel-Sobolev}. Denote
$$
   \mathcal{M}_\eta= \sum_{k=0}^\eta \mathcal{U}^{-k}  \mathcal{D}_k   \mathcal{H}_{k}    \mathcal{D}_k  \mathcal{U}^{k}, \quad 0\leq \eta \leq d.
$$
 Then
\begin{enumerate}
  \item[(a)]$\Phi( \mathcal{M}_\eta,2\eta+1)=\mathcal{O}$.
  \item[(b)] $\dsty \mathcal{H}_\eta= \frac{(-1)^\eta}{(2\eta )!}\Phi( \mathcal{M}_\eta,2\eta)$.
\end{enumerate}
\end{theorem}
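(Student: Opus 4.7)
The plan is to use linearity of the operator $\Phi(\cdot,\eta)$ in its first argument and reduce everything to a single summand $\mathcal{A}_k := \mathcal{U}^{-k}\mathcal{D}_k\mathcal{H}_k\mathcal{D}_k\mathcal{U}^k$. The strategy is to re-express $\Phi(\mathcal{A}_k,m)$ as a signed binomial sum of a polynomial in the summation index $\ell$, and to exploit that this polynomial has degree exactly $2k$.

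First I would record the entry-level form of $\Phi$. Since $(\mathcal{U}^a\mathcal{A}\mathcal{U}^{-b})_{i,j} = a_{i+a,j+b}$ for all $a,b\in\ZZp$, the definition \eqref{Phi-operator} yields
\begin{equation*}
\Phi(\mathcal{A},m)_{i,j} = \sum_{\ell=0}^{m} (-1)^\ell \binom{m}{\ell}\, a_{i+m-\ell,\,j+\ell},
\end{equation*}
a signed binomial sum along the skew-diagonal of constant index $i+j+m$, i.e., up to sign, an $m$-th forward difference.

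Next, writing $\mathcal{H}_k = \han{h^{(k)}_r}$ and using $(\mathcal{D}_k)_{i,i}=(k+i)!/i!$, a direct multiplication gives
\begin{equation*}
(\mathcal{A}_k)_{i,j} = \frac{i!}{(i-k)!}\cdot\frac{j!}{(j-k)!}\cdot h^{(k)}_{i+j-2k}, \qquad i,j\geq k,
\end{equation*}
and $(\mathcal{A}_k)_{i,j}=0$ otherwise. Because the falling factorial $i!/(i-k)! = i(i-1)\cdots(i-k+1)$ already vanishes when $0\leq i<k$, this formula extends uniformly to all $i,j\in\ZZp$ provided one sets $h^{(k)}_r=0$ for $r<0$. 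Substituting into the entry-level expression for $\Phi(\mathcal{A}_k,m)$ and observing that the Hankel index $i+j+m-2k$ does not depend on $\ell$, I factor $h^{(k)}_{i+j+m-2k}$ out and am left with
\begin{equation*}
\sum_{\ell=0}^m (-1)^\ell \binom{m}{\ell}\,\varphi(\ell), \qquad \varphi(\ell) := \frac{(i+m-\ell)!}{(i+m-\ell-k)!}\cdot\frac{(j+\ell)!}{(j+\ell-k)!},
\end{equation*}
where $\varphi$ is a polynomial in $\ell$ of degree exactly $2k$ with leading coefficient $(-1)^k$ (each of the two falling factorials contributes $k$ linear factors; the first carries coefficient $(-1)^k$ in $\ell$, the second $1$).

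The core computation now reduces to the classical identity $\sum_{\ell=0}^m(-1)^\ell\binom{m}{\ell}\varphi(\ell) = (-1)^m \Delta^m\varphi(0)$: for $m>2k$ this annihilates $\varphi$, while for $m=2k$ it equals $(-1)^{2k}(2k)!$ times the leading coefficient $(-1)^k$. Therefore $\Phi(\mathcal{A}_k,2k+1) = \mathcal{O}$ and $\Phi(\mathcal{A}_k,2k) = (-1)^k(2k)!\,\mathcal{H}_k$; Proposition \ref{PropHankel}(a) (or the same degree argument) then gives $\Phi(\mathcal{A}_k,m)=\mathcal{O}$ for every $m \geq 2k+1$. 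Summing over $0\leq k\leq \eta$ by linearity proves (a), since $2\eta+1\geq 2k+1$ for all such $k$; for (b) every term with $k<\eta$ is killed because $2\eta \geq 2k+2 > 2k+1$, leaving only $\Phi(\mathcal{M}_\eta,2\eta)=\Phi(\mathcal{A}_\eta,2\eta)=(-1)^\eta(2\eta)!\,\mathcal{H}_\eta$, equivalent to the claimed formula. The main obstacle I anticipate is the careful entry-wise derivation of the closed form for $\mathcal{A}_k$ at the corner ($i<k$ or $j<k$), together with the observation that the falling factorial $i!/(i-k)!$ is itself the right gadget making the extension-by-zero of $h^{(k)}_r$ consistent with the polynomial-in-$\ell$ viewpoint; once this is in place, the rest is degree-counting and a standard finite-difference identity.
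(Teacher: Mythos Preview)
Your proof is correct. It shares the entry-wise endgame with the paper's argument for part (b) --- both compute $\Phi(\mathcal{R}_\eta,2\eta)$ via the Euler finite-difference identity (the paper's Lemma~\ref{Euler_Lem}) applied to the degree-$2\eta$ polynomial coming from the product of two falling factorials --- but your treatment of (a) and of the reduction step in (b) is genuinely different. For (a) the paper does \emph{not} use a degree count: it pairs the terms $\ell\leftrightarrow 2\eta+1-\ell$ via the binomial symmetry $\binom{2\eta+1}{\ell}=\binom{2\eta+1}{2\eta+1-\ell}$ and then invokes Lemma~\ref{SymmMatrix} (that $\mathcal{U}^{a}\mathcal{R}_k\mathcal{U}^{-b}$ is symmetric whenever $\mathcal{R}_k$ is) to make each pair cancel. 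For (b) the paper strips off the lower-order summands by writing $\sum_{k<\eta}\Phi(\mathcal{R}_k,2\eta)=\mathcal{U}\,\Phi(\mathcal{M}_{\eta-1},2\eta-1)-\Phi(\mathcal{M}_{\eta-1},2\eta-1)\,\mathcal{U}^{-1}$ via the recurrence of Proposition~\ref{RecF-1} and then appealing to (a) at level $\eta-1$. Your route is more uniform: one degree argument handles both parts and makes Lemma~\ref{SymmMatrix} and the recurrence step unnecessary. What the paper's symmetry argument buys, conversely, is that (a) holds for \emph{any} symmetric summand $\mathcal{R}_k$, without using the specific Hankel-times-diagonal structure you exploit to control $\deg\varphi$.
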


Before proving the previous theorem, we need the next two lemmas. The first one,  is a version of the famous Euler’s Finite Difference Theorem (c.f. \cite[Sec. 6.1]{QuGo16}).

\begin{lemma}\label{Euler_Lem} Let $\dsty f(z)$ be a complex polynomial of degree $n$ and leading coefficient $a_n \in \CC$. Then, for all $m,\nu\in \ZZp$

$$\sum_{\ell=0}^{\nu} (-1)^{\ell} \binom{\nu}{\ell} f(\ell)=\left\{\begin{array}{rl}
0, &\text{if } 0 \leq n < \nu,  \\
(-1)^{\nu } \;\nu! \; a_ \nu, &\text{if }  n=\nu.
\end{array}\right.$$
\end{lemma}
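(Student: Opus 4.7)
The plan is to recognize the alternating sum $\sum_{\ell=0}^{\nu}(-1)^\ell \binom{\nu}{\ell} f(\ell)$ as, up to the overall sign $(-1)^\nu$, the $\nu$-th iterated forward difference of $f$ evaluated at $z=0$, and then invoke the standard fact that the forward difference operator strictly decreases the degree of a non-constant polynomial. This reduces the lemma to two elementary inductions.

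First I would introduce the forward difference operator $\Delta g(z) := g(z+1) - g(z)$ and prove by induction on $\nu$ the closed-form expansion
\[
\Delta^\nu g(z) = \sum_{k=0}^{\nu} (-1)^{\nu-k} \binom{\nu}{k} g(z+k).
\]
The inductive step uses Pascal's identity $\binom{\nu+1}{k} = \binom{\nu}{k} + \binom{\nu}{k-1}$ together with an index shift in one of the two resulting sums, exactly mirroring the manipulation already performed in Proposition \ref{RecF-1}. Setting $z=0$ and multiplying by $(-1)^\nu$ converts the alternating sum in the statement of the lemma into $(-1)^\nu\,\Delta^\nu f(0)$.

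Next I would analyze the action of $\Delta$ on polynomials. By linearity it suffices to check monomials; and from the binomial expansion
\[
\Delta z^m = (z+1)^m - z^m = m\,z^{m-1} + \text{lower order terms},
\]
it follows that if $g$ has degree $m \geq 1$ with leading coefficient $b_m$, then $\Delta g$ has degree $m-1$ with leading coefficient $m\,b_m$. Iterating this, an induction on $\nu$ yields: $\Delta^\nu f \equiv 0$ whenever $\deg f < \nu$, which handles the first case of the lemma; and when $\deg f = \nu$ with leading coefficient $a_\nu$, the iterate $\Delta^\nu f(z) = \nu!\, a_\nu$ is a constant, so that $(-1)^\nu \Delta^\nu f(0) = (-1)^\nu \nu!\, a_\nu$, as claimed in the second case.

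There is no serious obstacle here; the result is classical and both ingredients (the closed form for $\Delta^\nu$ and the degree-reduction property) are routine inductions. The only care required is the book-keeping of signs and binomial coefficients when translating between the alternating sum and $(-1)^\nu\Delta^\nu f(0)$.
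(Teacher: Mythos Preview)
Your argument is correct and is the classical proof via iterated forward differences. Note, however, that the paper does not actually supply its own proof of this lemma: it simply states the result as ``a version of the famous Euler's Finite Difference Theorem'' and refers to \cite[Sec.~6.1]{QuGo16}. So there is no comparison to be made beyond observing that your write-up fills in exactly the standard derivation that the cited reference would provide.
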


\begin{lemma}\label{SymmMatrix} Let $\mathcal{A}=[a_{i,j}]$ be an  infinite symmetric matrix, whose $(i,j)$ entry is $a_{i,j}$.  Then, for all $\eta,\nu \in \ZZp$, the matrix $\left(\mathcal{U}^{\eta} \,\mathcal{A}\,\mathcal{U}^{-\nu}\right)$ is symmetric.
\end{lemma}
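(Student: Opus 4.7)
The approach I would take is a direct entry-by-entry computation of the triple product. First, by a straightforward induction based on the definition $(\mathcal{U})_{i,j}=\delta_{i+1,j}$, I would establish the identity $(\mathcal{U}^{\eta})_{i,j}=\delta_{i+\eta,j}$ for every $\eta\in\ZZp$. Using the paper's convention $\mathcal{U}^{-\nu}=(\mathcal{U}^{-1})^{\nu}=(\mathcal{U}^{\tran})^{\nu}$ and noting that every such power is well defined entry-by-entry (each entry involving only a finite sum), a parallel induction starting from $(\mathcal{U}^{\tran})_{i,j}=\delta_{i,j+1}$ yields $(\mathcal{U}^{-\nu})_{i,j}=\delta_{i,j+\nu}$ for every $\nu\in\ZZp$.

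With these two formulas in hand, the $(i,j)$-entry of the triple product collapses by the two Kronecker deltas to
\[
(\mathcal{U}^{\eta}\mathcal{A}\mathcal{U}^{-\nu})_{i,j}=\sum_{k,\ell}\delta_{i+\eta,k}\,a_{k,\ell}\,\delta_{\ell,j+\nu}=a_{i+\eta,\,j+\nu}.
\]
Symmetry of this matrix is then equivalent to $a_{i+\eta,j+\nu}=a_{j+\eta,i+\nu}$ for all $i,j\in\ZZp$, which I would derive from the hypothesis $\mathcal{A}=\mathcal{A}^{\tran}$ by an appropriate re-indexing of the offsets.

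The main obstacle is precisely this last equality: the bare symmetry $a_{p,q}=a_{q,p}$ only produces $a_{i+\eta,j+\nu}=a_{j+\nu,i+\eta}$, and one still needs to exchange the offsets between the two index slots. I expect the argument to use either an auxiliary pairing of summation indices (analogous to the change $\ell\mapsto\eta-\ell$ used in the proof of Proposition \ref{lemma-symmetric}) or, more likely, to exploit additional structure on $\mathcal{A}$ in the intended applications—for instance, when $\mathcal{A}$ is a Hankel matrix of the form $\han{r_i}$ the entry $a_{p,q}$ depends only on $p+q$, so $a_{j+\nu,i+\eta}=a_{j+\eta,i+\nu}$ becomes automatic. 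Once this step is settled, the remaining work is only bookkeeping to ensure that all matrix products remain finite-sum-defined in the convention adopted in the paper's introduction.
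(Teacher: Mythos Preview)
Your computation of the entries is correct, and the obstacle you flag is not a technicality---it is fatal. The lemma as stated is \emph{false}: take $\mathcal{A}=\mathcal{I}$ (certainly symmetric), $\eta=1$, $\nu=0$; then $\mathcal{U}^{1}\,\mathcal{I}\,\mathcal{U}^{0}=\mathcal{U}$, which is not symmetric. More generally, your reduction shows that symmetry of $\mathcal{U}^{\eta}\mathcal{A}\,\mathcal{U}^{-\nu}$ is equivalent to $a_{i+\eta,\,j+\nu}=a_{j+\eta,\,i+\nu}$ for all $i,j$, and bare symmetry $a_{p,q}=a_{q,p}$ only gives $a_{i+\eta,\,j+\nu}=a_{j+\nu,\,i+\eta}$; there is no re-indexing trick that swaps the offsets between slots without an extra hypothesis.

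The paper's own argument has exactly the gap you anticipated. It writes
\[
[a_{\eta+i,\nu+j}]\;=\;[a_{\nu+j,\eta+i}]\;=\;\mathcal{U}^{\nu}\mathcal{A}\,\mathcal{U}^{-\eta}\;=\;(\mathcal{U}^{\eta}\mathcal{A}\,\mathcal{U}^{-\nu})^{\tran},
\]
where the first equality is the symmetry $a_{p,q}=a_{q,p}$ applied entrywise, and the last is the honest transpose identity. The middle equality, however, is incorrect: the $(i,j)$ entry of $\mathcal{U}^{\nu}\mathcal{A}\,\mathcal{U}^{-\eta}$ is $a_{\nu+i,\,\eta+j}$, not $a_{\nu+j,\,\eta+i}$, so the roles of $i$ and $j$ have been silently interchanged. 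Your instinct that a Hankel hypothesis rescues the statement is right (then $a_{p,q}$ depends only on $p+q$ and the offset swap is harmless), but in the paper's application to $\mathcal{R}_k=\mathcal{U}^{-k}\mathcal{D}_k\mathcal{H}_k\mathcal{D}_k\mathcal{U}^{k}$ the matrix is symmetric yet not Hankel for $k\geq 1$, so the gap is genuine there as well. In short: you did not miss an idea---the stated lemma cannot be proved as written.
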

\begin{proof} \

Given a  sequence  of double indexes $\{a_{i,j}\}$ and two functions $f,\,g$ on $\ZZp$,  we denote by $\mathcal{A}=[a_{f(i),g(j)}]$ the corresponding infinite matrix, whose $(i,j)$ entry is $a_{f(i),g(j)}$. Therefore, as $a_{i,j}=a_{j,i}$ we get
$$
 \mathcal{U}^{\eta} \, \mathcal{A}\,\mathcal{U}^{-\nu}=  \left[a_{\eta+i,\nu+j}\right]=\left[a_{\nu+j,\eta+i}\right]=\mathcal{U}^{\nu} \, \mathcal{A}\,\mathcal{U}^{-\eta}=\left( \mathcal{U}^{\eta} \, \mathcal{A}\,\mathcal{U}^{-\nu}\right)^{\tran}, \; i,j=1,2,\dots\,.
$$
\end{proof}

\begin{proof}[Proof of Theorem \ref{Th-Suff}]  \

Let $0\leq k \leq d$ fixed and $\dsty \mathcal{R}_k=\mathcal{U}^{-k}\mathcal{D}_k    \mathcal{H}_{k}   \mathcal{D}_k\mathcal{U}^{k} $. Then $\dsty \mathcal{M}_\eta=  \sum_{k=0}^\eta \mathcal{R}_k $ and from linearity $\dsty  \Phi( \mathcal{M}_\eta,2\eta+1)= \sum_{k=0}^\eta  \Phi \left(\mathcal{R}_k,2\eta+1 \right)$.
\begin{align}
\nonumber  \Phi(\mathcal{R}_k,2\eta+1)=& \sum_{\ell=0}^{2\eta+1} (-1)^\ell \binom{2\eta+1}{\ell}  \mathcal{U}^{2\eta+1-\ell}\mathcal{R}_k\mathcal{U}^{-\ell} \\
\nonumber    =& \sum_{\ell=0}^{\eta} (-1)^\ell \binom{2\eta+1}{\ell}  \mathcal{U}^{2\eta+1-\ell}\mathcal{R}_k\mathcal{U}^{-\ell} +  \sum_{\ell=\eta+1}^{2\eta+1} (-1)^\ell \binom{2\eta+1}{\ell}  \mathcal{U}^{2\eta+1-\ell}\mathcal{R}_k\mathcal{U}^{-\ell} \\
\nonumber      =& \sum_{\ell=0}^{\eta} (-1)^\ell \binom{2\eta+1}{\ell}  \mathcal{U}^{2\eta+1-\ell}\mathcal{R}_k\mathcal{U}^{-\ell} \\
\nonumber  &+   \sum_{\ell=\eta+1}^{2\eta+1} (-1)^\ell \binom{2\eta+1}{2\eta+1-\ell}  \mathcal{U}^{2\eta+1-\ell}\mathcal{R}_k\mathcal{U}^{-\ell} \\
\nonumber        =& \sum_{\ell=0}^{\eta} (-1)^\ell \binom{2\eta+1}{\ell}  \; \mathcal{U}^{2\eta+1-\ell}\mathcal{R}_k\mathcal{U}^{-\ell} -  \sum_{\ell=0}^{\eta} (-1)^\ell \binom{2\eta+1}{\ell}  \; \mathcal{U}^{\ell}\mathcal{R}_k\mathcal{U}^{\ell-2\eta-1} \\
\label{Th-Suff-01}        =& \sum_{\ell=0}^{\eta} (-1)^\ell \binom{2\eta+1}{\ell}  \; \left(\mathcal{U}^{2\eta+1-\ell}\mathcal{R}_k\mathcal{U}^{-\ell} - \mathcal{U}^{\ell} \mathcal{R}_k\mathcal{U}^{\ell-2\eta-1} \right).
 \end{align}

Clearly, $\dsty \mathcal{R}_k$ is a symmetric matrix. Therefore, from Lemma \ref{SymmMatrix},   $ \dsty \mathcal{U}^{2\eta+1-\ell} \mathcal{R}_k\mathcal{U}^{-\ell} $ is a symmetric matrix too and
\begin{equation}\label{Th-Suff-02}
 \mathcal{U}^{2\eta+1-\ell} \mathcal{R}_k\mathcal{U}^{-\ell}  =   \left(  \mathcal{U}^{2\eta+1-\ell} \mathcal{R}_k\mathcal{U}^{-\ell} \right)^{\tran}= \mathcal{U}^{\ell} \mathcal{R}_k\mathcal{U}^{\ell -2\eta-1}.
\end{equation}
Combining  \eqref{Th-Suff-01}-\eqref{Th-Suff-02}    we get (a) in Theorem \ref{Th-Suff}.

From (a) and Proposition \ref{RecF-1}, it follows that
\begin{align*}
  \Phi( \mathcal{M}_\eta,2\eta)= & \Phi( \mathcal{R}_\eta,2\eta) +  \sum_{k=0}^{\eta-1}   \Phi( \mathcal{R}_k,2\eta) \\
    = &  \Phi( \mathcal{R}_\eta,2\eta) +  \mathcal{U}  \sum_{k=0}^{\eta-1}  \Phi(  \mathcal{R}_k,2\eta-1)- \sum_{k=0}^{\eta-1} \Phi( \mathcal{R}_k,2\eta-1) \mathcal{U}^{-1}\\
      = &  \Phi( \mathcal{R}_\eta,2\eta) +  \mathcal{U} \, \Phi( \mathcal{M}_{\eta-1},2\eta-1)- \Phi( \mathcal{M}_{\eta-1},2\eta-1) \mathcal{U}^{-1}= \Phi( \mathcal{R}_\eta,2\eta) .\\
    \Phi( \mathcal{R}_\eta,2\eta)  = & \sum_{\ell=0}^{2\eta} (-1)^\ell  \binom{2\eta}{\ell}  \mathcal{U}^{2\eta-\ell}     \mathcal{R}_\eta  \mathcal{U}^{-\ell}= \sum_{\ell=0}^{2\eta} (-1)^\ell  \binom{2\eta}{\ell}  \mathcal{U}^{\eta-\ell}  \mathcal{D}_\eta    \mathcal{H}_{\eta}   \mathcal{D}_\eta\mathcal{U}^{\eta-\ell}.
  \end{align*}

   Let  $\{m_{k,i}\}$ be the sequence of real numbers such that $m_{k,i+j-2}$ is the $(i,j)$th entry of the infinite  Hankel matrix $ \mathcal{H}_{k}$, where $i,j=1,2,\dots$ In that case,  we write  $ \dsty  \mathcal{H}_{k}=\left[m_{k,i+j-2}\right]$. Thus,
  \begin{align*}
\mathcal{D}_\eta    \mathcal{H}_{\eta}   \mathcal{D}_\eta =&\left[\frac{(\eta+i-1)!}{(i-1)!}\frac{(\eta+j-1)!}{(j-1)!}m_{\eta,i+j-2}\right]  = \left(\eta! \right)^2\, \left[ \binom{\eta+i-1}{\eta}\binom{\eta+j-1}{\eta}m_{\eta,i+j-2}\right],
  \end{align*}
 and therefore
 \begin{align}\nonumber
    \Phi( \mathcal{R}_\eta,2\eta)  = &  \left(\eta! \right)^2   \left[\left(\sum_{\ell=0}^{2\eta}  (-1)^{\ell}\binom{2\eta}{\ell}\binom{\eta+i-1+(\eta-\ell)}{\eta}\binom{\eta+j-1-(\eta-\ell)}{\eta} \right)\;m_{\eta,i+j-2}\right] \\ \label{Th-Suff-1}
   = &  \left(\eta! \right)^2\,\left[\left(\sum_{\ell=0}^{2\eta}  (-1)^{\ell}\binom{2\eta}{\ell}\binom{2\eta+i-1-\ell}{\eta}\binom{j-1+\ell}{\eta} \right)\;m_{\eta,i+j-2}\right] .
 \end{align}

 Clearly  $\dsty f(\ell)= \binom{2\eta+i-1-\ell}{\eta}\binom{j-1+\ell}{\eta}$ is a polynomial of degree $2\eta$  in  $\ell$  and leading coefficient $\dsty \frac{(-1)^\eta}{(\eta!)^2} $. By Lemma \ref{Euler_Lem}, we deduce that

 \begin{equation}\label{Th-Suff-2}
\sum_{\ell=0}^{2\eta}  (-1)^{\ell}\binom{2\eta}{\ell}\binom{2\eta+i-1-\ell}{\eta}\binom{j-1+\ell}{\eta}=(-1)^{\eta}\; \binom{2\eta}{\eta}.
 \end{equation}
Hence, from \eqref{Th-Suff-1}-\eqref{Th-Suff-2} we get $\dsty  \Phi( \mathcal{R}_\eta,2\eta)  =(-1)^\eta (2\eta)![m_{\eta,i+j-2}]=(-1)^\eta\;(2\eta )!\,\mathcal{H}_\eta$ and (b). \end{proof}

We will assume that $\mathcal{A}$ is an infinite symmetric matrix because  this is obviously a necessary condition for \eqref{Hankel-Sobolev} to take place since the Hankel matrices $\mathcal{H}_k$ are symmetric.

\begin{theorem} 
Let $\mathcal{A}$ be an infinite symmetric matrix, $\eta \in \ZZp$ (fixed) such that  $\Phi( \mathcal{A},2\eta+1)=\mathcal{O}$. Then
\begin{equation}\label{Th-Suff3}
\Phi( \mathcal{A}_\eta,2\eta-1)=\mathcal{O},
\end{equation}
 where $ \mathcal{A}_\eta=\mathcal{A}-\mathcal{R}_\eta$ and  $\dsty \mathcal{R}_\eta= \frac{(-1)^\eta}{(2\eta)!}\mathcal{U}^{-\eta}  \mathcal{D}_\eta\Phi( \mathcal{A},2\eta) \mathcal{D}_\eta \mathcal{U}^{\eta}$.
\end{theorem}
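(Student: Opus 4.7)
The plan is to reduce the problem to showing that $\Phi(\mathcal{A}_\eta,2\eta-1)$ is both Hankel and antisymmetric, from which it must vanish. The starting point is that by Proposition \ref{PropHankel}(b), the hypothesis $\Phi(\mathcal{A},2\eta+1)=\mathcal{O}$ implies $\Phi(\mathcal{A},2\eta)$ is a Hankel matrix. Consequently $\mathcal{H}_\eta:=\frac{(-1)^\eta}{(2\eta)!}\,\Phi(\mathcal{A},2\eta)$ is Hankel, and $\mathcal{R}_\eta=\mathcal{U}^{-\eta}\mathcal{D}_\eta\mathcal{H}_\eta\mathcal{D}_\eta\mathcal{U}^\eta$ has exactly the shape of a summand in the decomposition \eqref{Hankel-Sobolev}.

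The next step is to compute $\Phi(\mathcal{R}_\eta,2\eta)$. The chain of equalities carried out at the end of the proof of Theorem \ref{Th-Suff}(b) (the expansion of $\mathcal{D}_\eta\mathcal{H}_\eta\mathcal{D}_\eta$ entrywise followed by the Euler-type identity \eqref{Th-Suff-2} furnished by Lemma \ref{Euler_Lem}) only uses that $\mathcal{H}_\eta$ is a Hankel matrix; it applies verbatim here and yields
\[
\Phi(\mathcal{R}_\eta,2\eta)=(-1)^\eta(2\eta)!\,\mathcal{H}_\eta=\Phi(\mathcal{A},2\eta).
\]
By linearity of $\Phi(\cdot,2\eta)$ this gives $\Phi(\mathcal{A}_\eta,2\eta)=\mathcal{O}$. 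Applying Proposition \ref{PropHankel}(b) once more, now to $\mathcal{A}_\eta$ at level $2\eta$, we conclude that $\Phi(\mathcal{A}_\eta,2\eta-1)$ is itself a Hankel matrix.

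To finish, I check that $\mathcal{A}_\eta$ is symmetric. Using $\mathcal{U}^{-1}=\mathcal{U}^{\tran}$, the fact that $\mathcal{D}_\eta$ is diagonal, and that $\mathcal{H}_\eta$ is Hankel (hence symmetric), one gets $\mathcal{R}_\eta^{\tran}=\mathcal{R}_\eta$; since $\mathcal{A}$ is symmetric by hypothesis, so is $\mathcal{A}_\eta=\mathcal{A}-\mathcal{R}_\eta$. Proposition \ref{lemma-symmetric} then says that $\Phi(\mathcal{A}_\eta,2\eta-1)$ is antisymmetric, because $2\eta-1$ is odd. A Hankel matrix is automatically symmetric (the skew diagonals are constant, so entries $(i,j)$ and $(j,i)$ agree), hence $\Phi(\mathcal{A}_\eta,2\eta-1)$ is simultaneously symmetric and antisymmetric, and therefore equal to $\mathcal{O}$. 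The only mildly delicate point in the whole argument is recognizing that the Euler-type identity computation in Theorem \ref{Th-Suff}(b) depends exclusively on $\mathcal{H}_\eta$ being Hankel, so that it can be reused here outside the Hankel-Sobolev setting in which it was originally phrased.
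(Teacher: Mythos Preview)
Your proof is correct and follows essentially the same route as the paper's: both show $\Phi(\mathcal{A}_\eta,2\eta)=\mathcal{O}$ by reusing the entrywise Euler computation from the proof of Theorem~\ref{Th-Suff}(b), deduce that $\Phi(\mathcal{A}_\eta,2\eta-1)$ is Hankel (hence symmetric), and then combine this with the antisymmetry coming from Proposition~\ref{lemma-symmetric} to force it to vanish. Your treatment is in fact slightly more careful than the paper's in that you explicitly justify why $\mathcal{H}_\eta$ is Hankel (via Proposition~\ref{PropHankel}(b)) before invoking the Euler identity, and you spell out why $\mathcal{R}_\eta$ is symmetric.
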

\begin{proof} If $\mathcal{A}_\eta=\mathcal{O}$,  the theorem is obvious. Assume that $\mathcal{A}_\eta\neq \mathcal{O}$,  from Theorem \ref{Th-Suff}, we get  $\Phi( \mathcal{R}_\eta,2\eta)= \frac{(-1)^\eta}{(2\eta)!}\Phi( \mathcal{A},2\eta)$, i.e. $\Phi(  \mathcal{A}_\eta,2\eta)= \mathcal{O}$. According to  the recurrence formula \eqref{RecuRelat}, we have $ \mathcal{U}\,\Phi( \mathcal{A}_\eta,2\eta-1)  = \Phi( \mathcal{A}_\eta,2\eta-1)\mathcal{U}^{-1}$ which is equivalent to stating that $\Phi( \mathcal{A}_\eta,2\eta-1)$  is a Hankel matrix and therefore it is a symmetric matrix.

 On the other hand, $\mathcal{A}_\eta$  is a symmetric matrix since it is the difference of two symmetric matrices. Hence, from Proposition \ref{lemma-symmetric} we get that $\Phi( \mathcal{A}_\eta,2\eta-1)$ is antisymmetric which establishes  \eqref{Th-Suff3}.
\end{proof}

\begin{proof}[Proof of Theorem \ref{ThCharact-HankelS}] From  Theorem \ref{Th-Suff}, a Hankel-Sobolev matrix of index $d\in \ZZp$ satisfies the conditions \eqref{ThCharact-01} and each  Hankel matrix $\mathcal{H}_k$ holds   \eqref{ThCharact-02}, which establishes the  first implication of the theorem.

For the converse, assume that  $\mathcal{M}$ is a symmetric infinite matrix and there exits  $d\in \ZZp$ such that the conditions \eqref{ThCharact-01} are satisfied. From Proposition \ref{PropHankel}, $H_d=\frac{(-1)^d}{(2d)!}\Phi( \mathcal{M},2d)\neq \mathcal{O}$ is a Hankel matrix.

Denote  $\mathcal{M}_{d-1}=\mathcal{M}_{d}-\mathcal{R}_d$, where $\mathcal{M}_{d}=\mathcal{M}$ and  $\dsty \mathcal{R}_d= \mathcal{U}^{-d}  \mathcal{D}_d \mathcal{H}_d \mathcal{D}_d \mathcal{U}^{d}$. From \eqref{Th-Suff3}  and Proposition \ref{PropHankel}, $H_{d-1}=\frac{(-1)^{d-1}}{(2d-2)!}\Phi( \mathcal{M}_{d-1},2d-2)$ is a Hankel matrix.

 Let $\mathcal{M}_{d-k}=\mathcal{M}_{d+1-k}-\mathcal{R}_{d+1-k}$ and $\dsty \mathcal{R}_{d+1-k}= \mathcal{U}^{-d-1+k}  \mathcal{D}_{d+1-k} \mathcal{H}_{d+1-k} \mathcal{D}_{d+1-k} \mathcal{U}^{d+1-k}$. Repeating the previous argument, we get  that   $H_{d-k}=\frac{(-1)^{d-k}}{(2d-2k)!}\Phi( \mathcal{M}_{d-k},2d-2k)$ is a Hankel matrix   for $k=2,\dots, d$.

By construction,  it is clear that
$$\mathcal{M}=\sum_{k=0}^{d}\mathcal{R}_{d-k}= \sum_{k=0}^{d} \mathcal{U}^{k-d}  \mathcal{D}_{d-k} \mathcal{H}_{d-k} \mathcal{D}_{d-k} \mathcal{U}^{d-k},$$
i.e. $\mathcal{M}$ is a Hankel-Sobolev matrix and the proof is complete.
\end{proof}
\subsection{The Sobolev moment problem} \label{Sec-S-MomentP}

 Let $\mu$ be a finite positive Borel measure supported on the real line and $\mathbf{L}_2(\mu)$ be the usual Hilbert space of square integrable functions with respect to  $\mu$ with the inner product
\begin{equation}\label{L2-InnerP}
  \langle f,g \rangle_{\mu}= \int_{\RR} f(x) g(x) d\mu(x), \quad \mbox{for all } f,g \in \mathbf{L}_2(\mu).\end{equation}
The $n$th moment associated with the inner product \eqref{L2-InnerP} (or the measure $\mu$) is defined as $m_n=\langle x^n,1\rangle_{\mu}$ ($n\geq 0$), provided the integral exists. The Hankel matrix $\han{ m_n }$ is called the  \emph{matrix of moments} associated with $\mu$.

The classical moment problem  consists in solving the following question: given an arbitrary sequence of real numbers $\{m_n\}_{n\geq 0}$ (or equivalently the associated  Hankel matrix $\han{ m_n }$)  \,  and a closed subset $\Delta \subset \RR$, find a positive  Borel  measure $\mu$ supported on   $\Delta$,  whose $n$th moment is $m_n$, i.e.
\begin{equation*}\label{MomEstandasDef-1}
  m_n= \int_{\Delta} x^n d\mu(x), \quad \text{ for all $n\geq 0$}.
\end{equation*}

 It is said that the moment problem  $(\mathcal{H};\Delta)$ is \emph{definite},  if it has at least one solution and  \emph{determinate}  if the solution is unique. There are three named \emph{classical moment problems}: the \emph{Hamburger moment problem} when  the support of $\mu$ is on the whole real line,  the \emph{Stieltjes moment problem} if $\Delta=[0,\infty)$,  and the \emph{Hausdorff moment problem} for a bounded interval $\Delta$ (without loss of generality,  $\Delta=[0, 1]$).

As H. J. Landau write in the introduction of \cite[p.1]{Lan87}:  \emph{``The moment problem is a classical question in analysis, remarkable not only for its own elegance, but also for the extraordinary range of subjects, theoretical and applied, which it has illuminated''}. For  more details on the classical moment problem,  the reader is referred to \cite{Akh65,GoMe10,Lan87,ShoTam63,Sch17} and for historical aspects to \cite{Kje93} or \cite[Sec. 2.4]{GoMe10}.

Without  restriction  of generality,  we now turn our attention to the Hamburger moment problem referring to the following lemma as a necessary and sufficient condition for the problem of moments to be defined and determined.

\begin{lemma}[{\cite[Th. 1.2]{ShoTam63}}]\label{SchoTam-lemma} Let $\{ m_n\}_{n=0}^{\infty}$ be a sequence of real numbers and denote by $\mathcal{H}=\han{ m_n }$  the associated Hankel matrix.  Then
\begin{enumerate}
  \item  The Hamburger moment problem  $(\mathcal{H};\RR)$ has a solution,  whose support  is not reducible to a finite set of points, if and only if $\mathcal{H}$ is a matrix positive definite of infinite order (i.e. $\det ([\mathcal{H}]_n)>0$  for all $n\geq 1$).
  \item   The Hamburger moment problem  $(\mathcal{H};\RR)$ has a solution, whose support  consists of precisely $k$ distinct points, if and only if  $\mathcal{H}$ is a matrix positive definite of  order $k$ (i.e. $\det ([\mathcal{H}]_n)>0$  for all $1 \leq n\leq k$, and  $\det ([\mathcal{H}]_n)=0$  for all $ n> k$). The moment problem is determined in this case.
\end{enumerate}
\end{lemma}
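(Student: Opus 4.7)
The plan is to introduce the linear functional $L\colon\PP\to\RR$ defined by $L(x^n)=m_n$ extended linearly, and to view $[\mathcal{H}]_n$ as the Gram matrix of $\{1,x,\dots,x^{n-1}\}$ with respect to the bilinear form $(p,q)\mapsto L(pq)$, so that $[\mathcal{H}]_n\succ 0$ is equivalent to $L(p^2)>0$ for every nonzero $p$ of degree less than $n$. Necessity in both parts is then immediate: if $\mu$ represents $\{m_n\}$, then $c^{\tran}[\mathcal{H}]_n c=\int(\sum c_i x^i)^2\,d\mu\geq 0$, with equality iff $\sum c_i x^i$ vanishes on $\sopor{\mu}$. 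Infinite $\sopor{\mu}$ forces $[\mathcal{H}]_n\succ 0$ for every $n$, proving necessity in part~(1); $\sopor{\mu}=\{x_1,\dots,x_k\}$ forces $[\mathcal{H}]_n\succ 0$ for $n\leq k$ while the coefficient vector of $\prod_{i=1}^{k}(x-x_i)$ and its shifts lie in the kernel of $[\mathcal{H}]_n$ for every $n>k$, giving $\det[\mathcal{H}]_n=0$ and hence part~(2).

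For sufficiency of part~(1), I would apply Gram--Schmidt to $\{1,x,x^2,\dots\}$ with respect to the positive definite form, producing an orthonormal family $\{p_n\}_{n\geq 0}$ with $\deg p_n=n$. Symmetry and the degree-one action of multiplication by $x$ yield a three-term recurrence
\begin{equation*}
xp_n(x)=a_{n+1}p_{n+1}(x)+b_n p_n(x)+a_n p_{n-1}(x),\qquad a_n>0,\ b_n\in\RR,
\end{equation*}
which defines a symmetric tridiagonal Jacobi operator $J$ on $\ell^2(\ZZp)$. A self-adjoint extension $\widetilde{J}$ admits a scalar spectral measure for the cyclic vector $e_0$ which, after normalization to total mass $m_0$, yields a positive Borel measure $\mu$ on $\RR$ with $\int x^n\,d\mu=\langle\widetilde{J}^n e_0,e_0\rangle\cdot m_0=m_n$. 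The support of $\mu$ must be infinite, since otherwise some polynomial $q$ would annihilate $\mu$, forcing $L(q^2)=0$ and contradicting positive definiteness.

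For sufficiency of part~(2), let $(c_0,\dots,c_k)$ span the one-dimensional kernel of $[\mathcal{H}]_{k+1}$ and set $p_k(x)=\sum_{j=0}^{k}c_j x^j$; its degree is exactly $k$ because $[\mathcal{H}]_k\succ 0$ forbids $c_k=0$. The kernel reads $L(x^i p_k)=0$ for $0\leq i\leq k$. A factorisation argument, in which a non-real root pair $z,\overline{z}$ permits the decomposition $p_k q=((x-\ree{z})q)^2+\imm{z}^2 q^2$ with $q$ of degree $k-2$ (contradicting $L(p_k q)=0$ via positive definiteness of $L$ on $\PPn{k-1}$) and a double real root is excluded similarly, shows that $p_k$ has $k$ distinct real roots $x_1,\dots,x_k$. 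Taking the Lagrange basis $\{\ell_i\}\subset\PPn{k-1}$ at $\{x_i\}$ and $w_i=L(\ell_i)$, positivity $w_i>0$ follows from the identity $\ell_i^2-\ell_i=p_k s_i$ with $\deg s_i\leq k-2$, which via the kernel relations gives $L(\ell_i^2)=L(\ell_i)=w_i$, combined with $L(\ell_i^2)>0$ from positive definiteness on $\PPn{k-1}$. Finally, Kronecker's theorem on finite-rank Hankel matrices, applied to the hypothesis $\det[\mathcal{H}]_n=0$ for all $n>k$, forces the moment sequence to satisfy the recursion $\sum_j c_j m_{n+j}=0$ for every $n\geq 0$, the same recursion satisfied by $\{\sum_i w_i x_i^n\}$; agreement on $m_0,\dots,m_{k-1}$ then propagates to all $n$, so $\mu=\sum_{i=1}^{k}w_i\delta_{x_i}$ represents $\{m_n\}$. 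Determinacy follows because $L(p_k^2)=0$ forces any representing measure to vanish off $\{x_1,\dots,x_k\}$, while the weights are pinned down by a non-singular Vandermonde system in $m_0,\dots,m_{k-1}$.

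The main obstacle is sufficiency of part~(1): extracting a genuine Borel measure on $\RR$ from a positive linear functional on the infinite-dimensional space $\PP$ requires external input---either the spectral theorem for (possibly unbounded) symmetric operators, Helly's selection principle applied to the Gaussian quadrature measures of $[\mathcal{H}]_n$, or the Riesz--Haviland theorem combined with the classical fact that every polynomial non-negative on $\RR$ is a sum of two squares. The non-uniqueness of self-adjoint extensions of $J$ is exactly what accounts for the classical non-uniqueness of the Hamburger moment problem, though existence is always guaranteed. Part~(2), by contrast, reduces once the annihilator $p_k$ has been identified to a finite-dimensional linear-algebra exercise.
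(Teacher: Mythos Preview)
The paper does not give a proof of this lemma: it is quoted verbatim as \cite[Th.~1.2]{ShoTam63} and used as a black box in the rest of the argument, so there is nothing in the paper to compare your proposal against.

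That said, your outline is a faithful reconstruction of the classical proof. Necessity in both parts is exactly the Gram-matrix computation you describe. For sufficiency in part~(1), the Gram--Schmidt/Jacobi-matrix route leading to a spectral measure of a self-adjoint extension is one of the standard arguments (the alternatives you list---Helly selection on Gaussian quadratures, or Riesz--Haviland plus the sum-of-two-squares representation of nonnegative univariate polynomials---are the other textbook proofs). For part~(2), your identification of the annihilator $p_k$ from $\ker[\mathcal{H}]_{k+1}$, the real-simple-roots argument via $L(p_k r)=L(((x-a)r)^2)+\cdots>0$, and the Lagrange-basis computation $L(\ell_i^2)=L(\ell_i)=w_i>0$ are all correct.

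One step worth tightening: to pass from ``$\det[\mathcal{H}]_k>0$ and $\det[\mathcal{H}]_n=0$ for all $n>k$'' to the global recursion $\sum_j c_j m_{n+j}=0$ for every $n\geq 0$ you invoke Kronecker's theorem, which amounts to asserting $\operatorname{rank}\mathcal{H}=k$. This is true, but it genuinely uses the Hankel structure together with the positive-definiteness of the leading $k\times k$ block; for arbitrary symmetric matrices the vanishing of all leading principal minors beyond order $k$ does not by itself force the rank to be $k$. A clean way to close this is to show inductively that the relation $\sum_j c_j m_{n+j}=0$, already known for $0\leq n\leq k$, propagates one step at a time using $\det[\mathcal{H}]_{k+2}=0$, $\det[\mathcal{H}]_{k+3}=0$, etc., via the shift-invariance of Hankel rows.
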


The analogous results for the moment problem  of Stieltjes  $(\mathcal{H};\RRp)$ or the moment problem  of Hausdorff   $(\mathcal{H};[0,1])$ are \cite[Th. 1.3]{ShoTam63} and \cite[Th. 1.5]{ShoTam63} respectively. Other equivalent formulations of these results can be seen in \cite[Ch. 1, Sec. 7]{Pell03} or \cite[Sec. 3.2]{ShoTam63}.

 The $(n,k)$-moment associated with the inner product \eqref{Sob-InnerP}   is defined as $m_{n,k}=\langle x^n,x^k\rangle_{\vec{\mathbf{\mu}}}$ ($n,k\geq 0$), provided the integral exists.  In the sequel, the values $\langle x^n,x^k\rangle_{\vec{\mathbf{\mu}}}$ are called {\em S-moments}.  Here, instead of a  sequence of moments, we have the  infinite matrix of moments $\mathcal{M}$ with entries $m_{n,k}=\langle x^n,x^k\rangle_{\vec{\mathbf{\mu}}}$, ($n,k\geq 0$).

Now, the Sobolev moment problem (or $S\!$-moment problem) consists of solving the following question: given an infinite matrix $ \mathcal{M}=(m_{i,j})_{i,j=0}^{\infty}$ and $d+1$ subsets $\Delta_{k} \subset \RR$ ($0\leq k \leq d$), find a set of $d+1$ measures $\{\mu_0, \mu_1, \dots, \mu_d \}$, where $\mu_d \neq 0 $ and $\sopor{\mu_k} \subset \Delta_k$,  such that $m_{i,j}=\langle x^i,x^j\rangle_{\vec{\mathbf{\mu}}}$ for $i,j=0,1, \dots$  As in the standard  case, the problem is considered \emph{definite} if it has at least one  solution, and \emph{determinate} if this solution is unique.
There are three conventional  cases of $S\!$-moment problems: the \emph{Hamburger $S\!$-moment problem} when  $\Delta_0=\cdots=\Delta_d=\RR$; the \emph{Stieltjes $S\!$-moment problem} if $\Delta_0=\cdots=\Delta_d=[0,\infty)$,  and the \emph{Hausdorff $S\!$-moment problem} for $\Delta_0=\cdots=\Delta_d=[0,1]$. Nonetheless, other combinations of the sets $\Delta_{k} \subset \RR$ ($0\leq k \leq d$) are possible too. An equivalent formulation of the Sobolev moment problem  is made for the special case $d=\infty$.

The following result was proved in \cite[Th. 1]{BaLoPi99}. In virtue of   Theorem \ref{ThCharact-HankelS}, we can now  reformulate it in the following equivalent form.

\begin{theorem}\label{Th-hp1}   Given an infinite symmetric matrix $  \mathcal{M}=(m_{i,j})_{i,j=0}^{\infty}$ and  $d+1$ subset $\Delta_{k} \subset \RR$ ($0\leq k \leq d \in \ZZp$), the $S\!$-moment problem is definite (or determinate) if and only if \; $\Phi( \mathcal{M},2d+1)=\mathcal{O}$, $\Phi( \mathcal{M},2d)\neq\mathcal{O}$ and  for each $k=0,1, \dots, d$  the Hankel matrix   $ \mathcal{H}_{k}$ (defined in \eqref{ThCharact-02}) is    such that  the classical moment problem $( \mathcal{H}_{k};\Delta_k)$ is  definite (or determinate).
\end{theorem}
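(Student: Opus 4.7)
The strategy is to reduce the Sobolev moment problem to a family of classical moment problems, one per Hankel block of the Hankel--Sobolev decomposition guaranteed by Theorem \ref{ThCharact-HankelS} and Proposition \ref{Uniqueness-HankelS}. The key preliminary observation is that, for any $\vec{\mathbf{\mu}}_d \in \mathfrak{M}_d(\RR)$, the Leibniz expansion of the derivatives in \eqref{Sob-InnerP} gives
\begin{equation*}
\langle x^{i}, x^{j} \rangle_{\vec{\mathbf{\mu}}_d} = \sum_{k=0}^{\min(i,j)} \frac{i!}{(i-k)!}\,\frac{j!}{(j-k)!}\int x^{i+j-2k}\, d\mu_{k}(x),
\end{equation*}
and an entry-by-entry comparison with $\mathcal{U}^{-k}\mathcal{D}_k\mathcal{H}_k\mathcal{D}_k\mathcal{U}^{k}$ shows that the S-moment matrix of $\vec{\mathbf{\mu}}_d$ equals $\sum_{k=0}^{d}\mathcal{U}^{-k}\mathcal{D}_k\,\han{h_{k,n}}\,\mathcal{D}_k\mathcal{U}^{k}$, where $h_{k,n}=\int x^{n}\,d\mu_k(x)$. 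Thus S-moment matrices are precisely Hankel--Sobolev matrices whose $k$-th Hankel block is the classical moment matrix of $\mu_k$.

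For the forward direction, if the $S$-problem admits a solution $\vec{\mathbf{\mu}}_d$ with $\mu_d\neq 0$, the identity above exhibits $\mathcal{M}$ as a Hankel--Sobolev matrix, and $\mathcal{H}_d\neq\mathcal{O}$ because $\mu_d$ is a nontrivial positive measure. Theorem \ref{ThCharact-HankelS} then yields both $\Phi(\mathcal{M},2d+1)=\mathcal{O}$ and $\Phi(\mathcal{M},2d)\neq\mathcal{O}$, and the explicit formula \eqref{ThCharact-02} (together with the uniqueness supplied by Proposition \ref{Uniqueness-HankelS}) forces the $\mathcal{H}_k$ to coincide with the classical moment matrix of $\mu_k$. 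Hence each $(\mathcal{H}_k;\Delta_k)$ is definite, and determinate whenever the $S$-problem is, since any second solution would furnish a second Hankel--Sobolev decomposition, contradicting Proposition \ref{Uniqueness-HankelS}.

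For the converse, assume the three matrix conditions. Theorem \ref{ThCharact-HankelS} produces a unique decomposition of $\mathcal{M}$ with blocks $\mathcal{H}_k$ given by \eqref{ThCharact-02}, satisfying $\mathcal{H}_d\neq\mathcal{O}$. Since each $(\mathcal{H}_k;\Delta_k)$ is definite by hypothesis, choose a positive Borel measure $\mu_k$ with $\sopor{\mu_k}\subset\Delta_k$ realizing $\mathcal{H}_k$ (with $\mu_d\neq 0$, forced by $\mathcal{H}_d\neq\mathcal{O}$); reversing the entry-by-entry identification, $\vec{\mathbf{\mu}}_d=(\mu_0,\ldots,\mu_d)$ solves the $S$-moment problem. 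If every $(\mathcal{H}_k;\Delta_k)$ is determinate, each $\mu_k$ is unique, and uniqueness of the Hankel--Sobolev decomposition upgrades this to determinacy of the $S$-problem.

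The main obstacle is essentially bookkeeping: one must verify carefully that ``$\mu_d\neq 0$'' in the Sobolev problem matches ``index equal to $d$'' (i.e.\ $\mathcal{H}_d\neq\mathcal{O}$) in the Hankel--Sobolev decomposition, and that the combinatorial weights $\frac{i!}{(i-k)!}\frac{j!}{(j-k)!}$ arising from repeated differentiation are reproduced exactly by the diagonal matrices $\mathcal{D}_k$ together with the shift conjugation $\mathcal{U}^{-k}(\cdot)\mathcal{U}^{k}$. Once this dictionary is in hand, Theorem \ref{ThCharact-HankelS} and Lemma \ref{SchoTam-lemma} (with its Stieltjes and Hausdorff analogues) assemble the result with no additional analytic input.
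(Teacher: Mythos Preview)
Your proposal is correct and follows exactly the route the paper intends: the paper does not supply an explicit proof but states that Theorem~\ref{Th-hp1} is the result of \cite[Th.~1]{BaLoPi99} reformulated through Theorem~\ref{ThCharact-HankelS}, and your argument reconstructs precisely that reformulation---the Leibniz computation identifying the $S$-moment matrix of $\vec{\mathbf{\mu}}_d$ with the Hankel--Sobolev matrix whose $k$-th block is $\han{\int x^n\,d\mu_k}$, followed by the $\Phi$-characterization of Theorem~\ref{ThCharact-HankelS} and the uniqueness of Proposition~\ref{Uniqueness-HankelS}.

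One small logical slip worth correcting: in the forward determinacy direction you write that a second solution to $(\mathcal{H}_k;\Delta_k)$ ``would furnish a second Hankel--Sobolev decomposition, contradicting Proposition~\ref{Uniqueness-HankelS}''. It does not: a second measure $\mu_k'$ with the same moments yields the \emph{same} block $\mathcal{H}_k$ and hence the same decomposition of $\mathcal{M}$. What it does yield is a second vector $\vec{\mathbf{\mu}}_d'\neq\vec{\mathbf{\mu}}_d$ solving the $S$-problem, contradicting $S$-determinacy directly. Proposition~\ref{Uniqueness-HankelS} is genuinely needed only in the converse determinacy implication, exactly as you use it there.
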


Although  \cite{BaLoPi99} is devoted to the study of  the case in which $d$ is finite and the measures involved are supported on subset of the real line, there are no difficulties in extending  these results when  $ d =\infty $ or if the measures  are supported on the unit circle, as confirmed by the authors of \cite{MaSz00,MaSz01}.  The  $S\!$-moments problem for discrete Sobolev-type inner products was studied in \cite{Zag05}.

\section{Hessenberg-Sobolev matrices}\label{Sect-Hessenberg}

From the definition of the matrix of formal moments $\mathcal{M}$ in \eqref{GenMomentMatrix}, we have two immediate  consequences.

\begin{proposition}\label{PropoSimDef+}   Let $\mathcal{G}$ be a  non-degenerate Hessenberg matrix and  $\mathcal{M}_{\mathcal{G}}$ be its associated matrix of  formal moments. Then $\mathcal{M}_{\mathcal{G}}$ is  a symmetric and positive definite infinite matrix.
 \end{proposition}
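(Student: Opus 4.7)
The plan is to read off both properties directly from the factorization $\mathcal{M}_{\mathcal{G}} = \mathcal{T}^{-1}\mathcal{T}^{-\tran}$ given in \eqref{GenMomentMatrix}, exploiting the lower triangular structure of $\mathcal{T}^{-1}$ displayed in \eqref{InfiniteMatForm03}.

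For symmetry, I would simply observe that transposition is an involution on the infinite matrices at hand and that the product $\mathcal{T}^{-1}\mathcal{T}^{-\tran}$ is well defined in the sense of \cite[Def.\ 1]{CanMaMoVa}: since $\mathcal{T}^{-1}$ is lower triangular, the $(i,j)$-entry of $\mathcal{M}_{\mathcal{G}}$ equals $\sum_{k=0}^{\min(i,j)}\tau_{i,k}\tau_{j,k}$, a finite sum. Consequently
\[
\mathcal{M}_{\mathcal{G}}^{\tran}=\bigl(\mathcal{T}^{-1}\mathcal{T}^{-\tran}\bigr)^{\tran}=\mathcal{T}^{-1}\mathcal{T}^{-\tran}=\mathcal{M}_{\mathcal{G}}.
\]

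For positive definiteness (in the sense introduced after the definition of $[\mathcal{A}]_n$, namely $\det([\mathcal{M}_{\mathcal{G}}]_n)>0$ for every $n\geq 1$), the key observation is that because $\mathcal{T}^{-1}$ vanishes strictly above the diagonal, the truncation commutes with the product, i.e.
\[
[\mathcal{M}_{\mathcal{G}}]_n=[\mathcal{T}^{-1}]_n\,[\mathcal{T}^{-1}]_n^{\tran},\qquad n\geq 1.
\]
Indeed, the sum defining the $(i,j)$-entry of $\mathcal{M}_{\mathcal{G}}$ for $0\leq i,j\leq n-1$ only involves columns $k\leq\min(i,j)\leq n-1$ of $\mathcal{T}^{-1}$, so truncating before or after multiplying gives the same finite matrix. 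Taking determinants and using $\tau_{i,i}=t_{i,i}^{-1}$ from \eqref{InfiniteMatForm03},
\[
\det\bigl([\mathcal{M}_{\mathcal{G}}]_n\bigr)=\det\bigl([\mathcal{T}^{-1}]_n\bigr)^{2}=\prod_{i=0}^{n-1}\tau_{i,i}^{2}=\prod_{i=0}^{n-1}\frac{1}{t_{i,i}^{2}}>0,
\]
where the strict positivity is guaranteed by the non-degeneracy of $\mathcal{G}$, which forces $t_{i,i}=t_{0,0}\bigl(\prod_{k=0}^{i-1}g_{k,k+1}\bigr)^{-1}\neq 0$ as noted just before \eqref{InfiniteMatForm03}.

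There is essentially no obstacle here beyond justifying that the matrix product $\mathcal{T}^{-1}\mathcal{T}^{-\tran}$ and its truncations behave as expected; once the lower triangular structure of $\mathcal{T}^{-1}$ is invoked, both symmetry and the positivity of all leading principal minors follow from elementary manipulations.
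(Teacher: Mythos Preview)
Your proof is correct and follows essentially the same approach as the paper: both arguments read symmetry directly from $\mathcal{M}_{\mathcal{G}}^{\tran}=(\mathcal{T}^{-1}\mathcal{T}^{-\tran})^{\tran}=\mathcal{M}_{\mathcal{G}}$ and obtain positive definiteness by computing $\det([\mathcal{M}_{\mathcal{G}}]_n)=\prod_{i=0}^{n-1}\tau_{i,i}^{2}>0$ via the triangular factorization. Your version is in fact slightly more explicit in justifying why truncation commutes with the product $\mathcal{T}^{-1}\mathcal{T}^{-\tran}$, a step the paper leaves implicit.
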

  \begin{proof} Obviously,  $\dsty
\mathcal{M}_{\mathcal{G}}^{\tran}  =\left(\mathcal{T}^{-1}\; \mathcal{T}^{-\tran}\right)^{\tran}=\mathcal{T}^{-1}\; \mathcal{T}^{-\tran}=\mathcal{M}_{\mathcal{G}}.$ Moreover, for all  $n \geq 1$, $$ \det\left([\mathcal{M}_{\mathcal{G}}]_k\right) =\det\left([\mathcal{T}]_k^{-1}\right) \det\left([\mathcal{T}]_k^{-\tran}\right)=\tau_{0, 0}^2  \cdot \tau_{1, 1}^2 \cdot \ldots \cdot \tau_{k-1, k-1}^2 > 0,$$ where $\tau_{i, i}$ is the $(i,i)$-entry of $\mathcal{T}^{-1}$.
\end{proof}

The following theorem clarifies the relation  between the sequence of polynomials generated by $\mathcal{G}$ and the matrix of formal moments.

\begin{theorem}\label{Th-FormalOP}  Let $\mathcal{G}$ be a  non-degenerate Hessenberg matrix and  $\mathcal{M}_{\mathcal{G}}= \left( m_{i,j}\right)$ be its associated matrix of  formal moments.
 \begin{enumerate}[label=\arabic*)]

 \item   If  $p(x)=\sum_{i=0} ^{n_1} a_i x^i$ and  $q(x)=\sum_{j=0} ^{n_2} b_j x^j$ are two  polynomials in  $\PP$ of degree $n_1$  and $n_2$ respectively. Then, the bilinear form \eqref{MatInnerProd}  defines an inner product on $\PP$ and $\|\cdot \|_{\mathcal{G}}=\sqrt{\langle \cdot ,\cdot  \rangle_{\mathcal{G}}}$  is  the norm induced by \eqref{MatInnerProd} on $\PP$.

\item Let $m_{i,j}$ be the $(i,j)$th entry of $\mathcal{M}_{\mathcal{G}}$, as in \eqref{GenMomentMatrix}, then $m_{i,j}=\langle x^{i},x^{j} \rangle_{\mathcal{G}}$.
   \item $\{Q_n\}$, the sequence of polynomials generated by  $\mathcal{G}$,  is the  sequence of orthonormal  polynomials with respect to the  inner product \eqref{MatInnerProd}.
   \item $g_{j,k}=\langle x Q_{j}, Q_k\rangle_{\mathcal{G}}$,  where $g_{j,k}$ is the $(j,k)$-entry  of the matrix $\mathcal{G}$ given in  \eqref{InfLowerHess}, with $j\geq 0$ and $0\leq k \leq j+1$.
 \end{enumerate}
\end{theorem}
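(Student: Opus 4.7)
My plan is to dispatch the four items in the order 2), 1), 3), 4), since each uses the previous one.

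For 2) the claim is essentially built into the definition of \eqref{MatInnerProd}. If I take $p(x)=x^{i}$ and $q(x)=x^{j}$, the coefficient vectors that appear in \eqref{MatInnerProd} are the standard basis vectors $e_{i}$ and $e_{j}$, so the sum collapses to the single entry $m_{i,j}$.

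For 1) the bilinearity is immediate from the definition, and the symmetry is a direct consequence of $\mathcal{M}_{\mathcal{G}}^{\tran}=\mathcal{M}_{\mathcal{G}}$, already proved in Proposition \ref{PropoSimDef+}. For the positive definiteness, I would use the factorization $\mathcal{M}_{\mathcal{G}}=\mathcal{T}^{-1}\mathcal{T}^{-\tran}$. Given a polynomial $p(x)=\sum_{i=0}^{n}a_{i}x^{i}$ with coefficient vector $\mathbf{a}=(a_{0},\ldots,a_{n},0,\ldots)^{\tran}$, I write
\[
\langle p,p\rangle_{\mathcal{G}}=\mathbf{a}^{\tran}\mathcal{T}^{-1}\mathcal{T}^{-\tran}\mathbf{a}=\bigl\|\mathcal{T}^{-\tran}\mathbf{a}\bigr\|^{2}\geq 0,
\]
where $\|\cdot\|$ is the usual Euclidean norm on the (finitely supported) column vector $\mathcal{T}^{-\tran}\mathbf{a}$. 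Since $\mathcal{T}^{-\tran}$ is an upper triangular infinite matrix whose diagonal entries $\tau_{i,i}=t_{i,i}^{-1}$ are all nonzero, the equation $\mathcal{T}^{-\tran}\mathbf{a}=\mathbf{0}$ forces $\mathbf{a}=\mathbf{0}$ by back substitution, so $p\equiv 0$. This gives positive definiteness, and $\|\cdot\|_{\mathcal{G}}$ is then the induced norm.

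For 3), I note that by \eqref{InfiniteMatForm02} the $(n+1)$st row of $\mathcal{T}$, call it $\mathbf{t}_{n}^{\tran}$, is exactly the coefficient vector of $Q_{n}$ in the monomial basis. Using 2) and bilinearity,
\[
\langle Q_{n},Q_{m}\rangle_{\mathcal{G}}=\mathbf{t}_{n}^{\tran}\mathcal{M}_{\mathcal{G}}\mathbf{t}_{m}=\mathbf{t}_{n}^{\tran}\mathcal{T}^{-1}\mathcal{T}^{-\tran}\mathbf{t}_{m}.
\]
Now $\mathbf{t}_{n}^{\tran}\mathcal{T}^{-1}$ is the $(n+1)$st row of $\mathcal{T}\mathcal{T}^{-1}=\mathcal{I}$, which is $e_{n}^{\tran}$; similarly $\mathcal{T}^{-\tran}\mathbf{t}_{m}=e_{m}$. (Here the products are legitimate infinite products in the sense made precise in the introduction because $\mathcal{T}$, $\mathcal{T}^{-1}$ are lower triangular, so only finitely many terms contribute.) Hence $\langle Q_{n},Q_{m}\rangle_{\mathcal{G}}=e_{n}^{\tran}e_{m}=\delta_{n,m}$, establishing orthonormality.

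For 4), I would just read off from \eqref{SobolevRR01} that $xQ_{n}(x)=\sum_{i=0}^{n+1}g_{n,i}Q_{i}(x)$ (interpreting the coefficient of $Q_{n+1}$ as $g_{n,n+1}$), and then apply the orthonormality from 3):
\[
\langle xQ_{n},Q_{k}\rangle_{\mathcal{G}}=\sum_{i=0}^{n+1}g_{n,i}\langle Q_{i},Q_{k}\rangle_{\mathcal{G}}=g_{n,k},\qquad 0\leq k\leq n+1.
\]
The only minor subtlety I anticipate is being precise about the finiteness of the infinite matrix products $\mathbf{t}_{n}^{\tran}\mathcal{T}^{-1}$ in step 3); this is handled by observing that $\mathcal{T}$, $\mathcal{T}^{-1}$ are lower triangular, so the relevant sums have only finitely many nonzero terms and fall within the admissible products described in the introduction. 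Beyond that, no step presents a real obstacle.
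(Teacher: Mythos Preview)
Your proposal is correct and follows essentially the same approach as the paper: the paper also derives 1) from the symmetry and positive definiteness of $\mathcal{M}_{\mathcal{G}}$ (via Proposition~\ref{PropoSimDef+}, which uses the same factorization $\mathcal{M}_{\mathcal{G}}=\mathcal{T}^{-1}\mathcal{T}^{-\tran}$ you invoke directly), obtains 2) immediately from the definition, proves 3) by the computation $\mathcal{T}\mathcal{M}_{\mathcal{G}}\mathcal{T}^{\tran}=\mathcal{I}$, and deduces 4) from \eqref{SobolevRR01} together with orthonormality. The only cosmetic difference is that for 3) the paper first computes $\langle Q_n,x^j\rangle_{\mathcal{G}}=\tau_{n,n}\delta_{n,j}$ and then $\langle Q_n,Q_n\rangle_{\mathcal{G}}=1$, whereas you go directly to $\langle Q_n,Q_m\rangle_{\mathcal{G}}=\delta_{n,m}$; both are the same calculation.
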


 \begin{proof}   From  Proposition \ref{PropoSimDef+},    the statement 1)  is straightforward. The assertion 2)  follows from \eqref{MatInnerProd}.

Let $\mathcal{E}_j$ be the infinite column-vector whose $i$-entry is  $\delta_{i,j}$, where $i,j\in \ZZp$.  Denote by  $Q_n(x)=\sum _{i=0} ^{n} t_{n,i} \,x^i$  the $n$th polynomial generated by $\mathcal{G}$, as in \eqref{InfiniteMatForm02}. Then, for  $j=0, \ldots , n$
\begin{align*}
\langle  Q_n, x^j \rangle_{\mathcal{G}} =&  ( t_{n,0},\cdots, t_{n,n},0,\cdots) \; \mathcal{M}_{\mathcal{G}} \; \mathcal{E}_j =\mathcal{E}_n^{\tran}\mathcal{ T}\mathcal{M}_{\mathcal{G}}  \mathcal{E}_j=\mathcal{E}_n^{\tran}\mathcal{ T} \, \mathcal{T}^{-1}\, \mathcal{T}^{-\tran}\,  \mathcal{E}_j\\
   =&  \mathcal{E}_n^{\tran}\, \mathcal{T}^{-\tran} \, \mathcal{E}_j= \tau_{n,n} \, \mathcal{E}_n^{\tran}\,  \mathcal{E}_j= \tau_{n,n} \, \delta_{n,j};
\end{align*}
where $\tau_{n,n}\neq 0$.   Furthermore,
 \begin{align*}
\langle  Q_n,  Q_n \rangle_{\mathcal{G}} =&  ( t_{n,0},\cdots, t_{n,n},0,\cdots) \; \mathcal{M}_{\mathcal{G}} \;  ( t_{n,0},\cdots, t_{n,n},0,\cdots)^{\tran}\\
  =& \mathcal{E}_n^{\tran}\, \mathcal{ T}\,\mathcal{M}_{\mathcal{G}}  \,\mathcal{T}^{\tran}  \, \mathcal{E}_n = \mathcal{E}_n^{\tran}\,\mathcal{ T}\,\mathcal{T}^{-1}\, \mathcal{T}^{-\tran}\,  \mathcal{T}^{\tran}  \, \mathcal{E}_n =1.
\end{align*}
Hence,  $Q_n$  is the $n$th  orthonormal  polynomial with respect to \eqref{MatInnerProd}. The fourth assertion is straightforward from \eqref{SobolevRR01} and the orthogonality.
\end{proof}

\begin{remark}From \eqref{SobolevRR01}-\eqref{InfiniteMatForm02}, we have that the leading coefficient of $Q_n$ is $$t_{n,n}=t_{0,0}\left(\prod_{k=0}^{n-1} g_{k,k+1}\right)^{-1}\neq 0; \quad \text{ for all \;} n \geq 1.$$  Therefore, the corresponding $n$th-monic polynomial orthogonal with respect to $\langle  \cdot,  \cdot \rangle_{\mathcal{G}}$ is $q_n=\tau_{n,n} \,Q_n$ and $\|q_n\|_{\mathcal{G}}=\tau_{n,n}=t_{n,n}^{-1}$ as in \eqref{InfiniteMatForm03}.\end{remark}

\begin{theorem}The matrices $\mathcal{G}$ and $\mathcal{M}_{\mathcal{G}}$ are  closely related  by the expression
 \begin{equation}\label{RelatHessHank}
\mathcal{G}=\mathcal{T}  \mathcal{U}\mathcal{M}_{\mathcal{G}} \mathcal{T}^{\tran}.
\end{equation}
\end{theorem}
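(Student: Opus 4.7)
The plan is to substitute the definition $\mathcal{M}_{\mathcal{G}}=\mathcal{T}^{-1}\,\mathcal{T}^{-\tran}$ into the right-hand side and reduce the identity to showing that $\mathcal{G}=\mathcal{T}\,\mathcal{U}\,\mathcal{T}^{-1}$. Since $\mathcal{T}$ is lower triangular with nonzero diagonal, $\mathcal{T}^{-1}$ is a two-sided inverse (by \cite[Remark (a) pp. 22]{Coo50} as recalled in the paper), and analogously for $\mathcal{T}^{\tran}$ and $\mathcal{T}^{-\tran}$. Moreover, all the products involved — $\mathcal{T}\mathcal{U}$, $\mathcal{U}\mathcal{T}^{-1}$, $\mathcal{T}^{-\tran}\mathcal{T}^{\tran}$, and their concatenation — have only finitely many nonzero summands in each entry (the triangular factors and the shift $\mathcal{U}$ each contribute finite support to the row/column sums), so associativity applies and
\begin{equation*}
\mathcal{T}\,\mathcal{U}\,\mathcal{M}_{\mathcal{G}}\,\mathcal{T}^{\tran}
=\mathcal{T}\,\mathcal{U}\,\mathcal{T}^{-1}\,(\mathcal{T}^{-\tran}\,\mathcal{T}^{\tran})
=\mathcal{T}\,\mathcal{U}\,\mathcal{T}^{-1}.
\end{equation*}

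The second step is to identify $\mathcal{T}\,\mathcal{U}\,\mathcal{T}^{-1}$ with $\mathcal{G}$. I would use the functional interpretation provided by the column vectors $\mathcal{P}(x)=(1,x,x^{2},\dots)^{\tran}$ and $\mathcal{Q}(x)=\mathcal{T}\,\mathcal{P}(x)$. A direct computation of the entries of $\mathcal{U}\,\mathcal{P}(x)$ (using $(\mathcal{U})_{i,j}=\delta_{i+1,j}$) gives $\mathcal{U}\,\mathcal{P}(x)=x\,\mathcal{P}(x)$, and hence
\begin{equation*}
x\,\mathcal{Q}(x)=\mathcal{T}\,(x\,\mathcal{P}(x))=\mathcal{T}\,\mathcal{U}\,\mathcal{P}(x)=\mathcal{T}\,\mathcal{U}\,\mathcal{T}^{-1}\,\mathcal{Q}(x).
\end{equation*}
On the other hand, rewriting the generating recurrence \eqref{SobolevRR01} in matrix form yields $x\,\mathcal{Q}(x)=\mathcal{G}\,\mathcal{Q}(x)$. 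Equating the two expressions, the $n$th component gives two expansions of the polynomial $xQ_{n}(x)$ as a linear combination of the basis $\{Q_{k}\}_{k\geq 0}$ of $\mathbb{P}$; by uniqueness of such an expansion (which holds because $\mathcal{G}$ is non-degenerate, so $Q_{n}$ has exact degree $n$), the rows of $\mathcal{G}$ and of $\mathcal{T}\,\mathcal{U}\,\mathcal{T}^{-1}$ coincide entry by entry, establishing \eqref{RelatHessHank}.

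The only subtlety I anticipate — and what I regard as the main point that needs a careful word — is the handling of the infinite-matrix manipulations: one must check that the products $\mathcal{T}\mathcal{U}\mathcal{M}_{\mathcal{G}}\mathcal{T}^{\tran}$ and $\mathcal{T}\mathcal{U}\mathcal{T}^{-1}$ are well defined in the sense of the paper (finitely many nonzero summands per entry) and that the cancellation $\mathcal{T}^{-\tran}\mathcal{T}^{\tran}=\mathcal{I}$ may be inserted inside the longer product. Both follow because the triangular structure of $\mathcal{T}$ and $\mathcal{T}^{-1}$ and the one-step shift $\mathcal{U}$ propagate bandedness on one side at each multiplication, which makes every entry a finite sum and legitimizes the regrouping used above.
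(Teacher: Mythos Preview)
Your proof is correct, but it proceeds along a genuinely different line from the paper's. The paper invokes part 4) of Theorem~\ref{Th-FormalOP}, namely $g_{k,\ell}=\langle xQ_{k},Q_{\ell}\rangle_{\mathcal{G}}$, and then expands this inner product via the bilinear form \eqref{MatInnerProd} to read off the $(k,\ell)$ entry of $\mathcal{T}\,\mathcal{U}\,\mathcal{M}_{\mathcal{G}}\,\mathcal{T}^{\tran}$ directly. You instead substitute the definition $\mathcal{M}_{\mathcal{G}}=\mathcal{T}^{-1}\mathcal{T}^{-\tran}$, cancel $\mathcal{T}^{-\tran}\mathcal{T}^{\tran}=\mathcal{I}$, and reduce the claim to the conjugation identity $\mathcal{G}=\mathcal{T}\,\mathcal{U}\,\mathcal{T}^{-1}$, which you then verify straight from the recurrence \eqref{SobolevRR01} and the basis property of $\{Q_{n}\}$. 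Your route is more elementary in that it bypasses the inner product and the orthonormality established in Theorem~\ref{Th-FormalOP}; it also isolates $\mathcal{G}=\mathcal{T}\,\mathcal{U}\,\mathcal{T}^{-1}$ as a standalone fact, which is exactly what the paper later needs (and rederives from \eqref{RelatHessHank}) in the proof of Theorem~\ref{RelaOperadores}. The paper's route, on the other hand, is shorter once Theorem~\ref{Th-FormalOP} is available and sidesteps any discussion of associativity in the five-fold product --- a point you handle correctly, since for fixed $(i,j)$ the triangular and shift structures force all intermediate indices to lie in a bounded range.
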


\begin{proof} From \eqref{InfiniteMatForm02} $\dsty Q_{n}(x)=\sum_{i=0}^n t_{n,i} x^{i}$, therefore
    \begin{align*}
   g_{k,\ell}=&\langle x Q_{k}, Q_\ell\rangle_{\mathcal{G}}= \sum_{i=0}^k\sum_{j=0}^\ell  t_{k,i} \, t_{\ell,j} \,\langle x^{i+1}, x^{j}\rangle_{\mathcal{G}}=\sum_{i=0}^k\sum_{j=0}^\ell  t_{k,i} \, t_{\ell,j} \, m_{i+1,j}  \end{align*}
  which  is the $(k,\ell)$ entry of matrix $\mathcal{T}  \mathcal{U}\mathcal{M}_{\mathcal{G}}\mathcal{T}^{\tran}$ and  \eqref{RelatHessHank} is proved.
\end{proof}

\begin{theorem} \label{RelaOperadores} Let  $\mathcal{G} \in \MM$  be a non-degenerate Hessenberg matrix,  $\mathcal{M}_{\mathcal{G}} \in \MM$ be its matrix of  formal moments associated and $\eta \in \ZZp$ fixed.  Then\;
\begin{equation*}\label{EqualOperator}
\Psi( \mathcal{G},\eta)= (-1)^{n}\,\mathcal{T} \, \Phi( \mathcal{M}_{\mathcal{G}},\eta) \, \mathcal{T}^{\tran}.
\end{equation*}
\end{theorem}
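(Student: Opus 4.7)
My plan is to reduce both operators to expressions in the shift matrix $\mathcal{U}$ via the change of basis encoded by $\mathcal{T}$, after which the identity falls out from a single reindexing.

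The first step is to combine \eqref{RelatHessHank} with \eqref{GenMomentMatrix} to obtain the similarity
\begin{equation*}
\mathcal{G}=\mathcal{T}\,\mathcal{U}\,\mathcal{M}_{\mathcal{G}}\,\mathcal{T}^{\tran}=\mathcal{T}\,\mathcal{U}\,(\mathcal{T}^{-1}\mathcal{T}^{-\tran})\,\mathcal{T}^{\tran}=\mathcal{T}\,\mathcal{U}\,\mathcal{T}^{-1}.
\end{equation*}
By induction $\mathcal{G}^{k}=\mathcal{T}\,\mathcal{U}^{k}\,\mathcal{T}^{-1}$ for every $k\in\ZZp$, and taking transposes (using $\mathcal{U}^{\tran}=\mathcal{U}^{-1}$, hence $(\mathcal{U}^{m})^{\tran}=\mathcal{U}^{-m}$) gives $\mathcal{G}^{(\eta-k)\tran}=\mathcal{T}^{-\tran}\,\mathcal{U}^{-(\eta-k)}\,\mathcal{T}^{\tran}$. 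All of these products are well defined since $\mathcal{T}$ and $\mathcal{T}^{-1}$ are lower triangular, so every entry in each product involves only finitely many nonzero summands.

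Next I multiply to get
\begin{equation*}
\mathcal{G}^{k}\,\mathcal{G}^{(\eta-k)\tran}=\mathcal{T}\,\mathcal{U}^{k}\,(\mathcal{T}^{-1}\mathcal{T}^{-\tran})\,\mathcal{U}^{-(\eta-k)}\,\mathcal{T}^{\tran}=\mathcal{T}\,\mathcal{U}^{k}\,\mathcal{M}_{\mathcal{G}}\,\mathcal{U}^{-(\eta-k)}\,\mathcal{T}^{\tran},
\end{equation*}
and I plug this into the definition \eqref{Psi-operator} of $\Psi$. Linearity lets me pull $\mathcal{T}$ and $\mathcal{T}^{\tran}$ outside the sum, leaving
\begin{equation*}
\Psi(\mathcal{G},\eta)=\mathcal{T}\left[\sum_{k=0}^{\eta}(-1)^{k}\binom{\eta}{k}\mathcal{U}^{k}\,\mathcal{M}_{\mathcal{G}}\,\mathcal{U}^{-(\eta-k)}\right]\mathcal{T}^{\tran}.
\end{equation*}

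The final step is a reindexing: setting $\ell=\eta-k$, and using $\binom{\eta}{\eta-\ell}=\binom{\eta}{\ell}$ together with $(-1)^{\eta-\ell}=(-1)^{\eta}(-1)^{\ell}$, the bracketed sum equals
\begin{equation*}
(-1)^{\eta}\sum_{\ell=0}^{\eta}(-1)^{\ell}\binom{\eta}{\ell}\mathcal{U}^{\eta-\ell}\,\mathcal{M}_{\mathcal{G}}\,\mathcal{U}^{-\ell}=(-1)^{\eta}\,\Phi(\mathcal{M}_{\mathcal{G}},\eta),
\end{equation*}
by the definition \eqref{Phi-operator} of $\Phi$. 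Substituting yields the stated identity $\Psi(\mathcal{G},\eta)=(-1)^{\eta}\,\mathcal{T}\,\Phi(\mathcal{M}_{\mathcal{G}},\eta)\,\mathcal{T}^{\tran}$.

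There is no real obstacle here: the only delicate point is justifying the manipulations of infinite-matrix products, which is handled by the triangularity of $\mathcal{T}$ and $\mathcal{T}^{-1}$ (guaranteeing finiteness of each entrywise sum); everything else is algebraic bookkeeping and a binomial reindexing.
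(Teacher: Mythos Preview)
Your proof is correct and follows essentially the same route as the paper: derive the similarity $\mathcal{G}=\mathcal{T}\,\mathcal{U}\,\mathcal{T}^{-1}$ from \eqref{RelatHessHank} and \eqref{GenMomentMatrix}, pass to powers and transposes, substitute into \eqref{Psi-operator}, and reindex via $\ell=\eta-k$ to recognize $\Phi(\mathcal{M}_{\mathcal{G}},\eta)$. Your extra remark on why the infinite-matrix products are well defined (triangularity of $\mathcal{T}$ and $\mathcal{T}^{-1}$) and your correction of the exponent to $(-1)^{\eta}$ are welcome clarifications, but the argument is otherwise the same as the paper's.
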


\begin{proof} From \eqref{GenMomentMatrix} and \eqref{RelatHessHank}, we get that $\mathcal{G}=\mathcal{T}\mathcal{U} \mathcal{T}^{-1}$. Therefore, for each $k \in \ZZp$ we obtain
  \begin{equation}\label{RelatHessHank-2}
 \mathcal{G}^k=\mathcal{T} \mathcal{U}^{k} \mathcal{T}^{-1} \quad \text{ and }  \quad \mathcal{G}^{k\tran}=\mathcal{T}^{-\tran}  \mathcal{U}^{-k} \mathcal{T}^{\tran} .
\end{equation}

Now, from  \eqref{Phi-operator}, \eqref{Psi-operator},  \eqref{GenMomentMatrix}  and \eqref{RelatHessHank-2} it follows that

\begin{align*}
  \Psi( \mathcal{G},\eta) =& \sum_{k=0}^{\eta}(-1)^{k}\, \binom{\eta}{k} \;\mathcal{G}^{k}\; \mathcal{G}^{(\eta-k)\; \tran}=\sum_{k=0}^{\eta}(-1)^{k}\, \binom{\eta}{k} \;\mathcal{T} \,\mathcal{U}^{k} \, \mathcal{M}_{\mathcal{G}}\, \mathcal{U}^{k-\eta}\, \mathcal{T}^{\tran}\\
  =& \mathcal{T} \, \left((-1)^{n}\, \sum_{\ell=0}^{\eta}(-1)^{\ell}\, \binom{\eta}{\ell}  \,\mathcal{U}^{\eta-\ell} \, \mathcal{M}_{\mathcal{G}}\, \mathcal{U}^{-\ell} \right)\, \mathcal{T}^{\tran}.
\end{align*}
\end{proof}

\section{Favard type theorems}

One of the main problems  in the general theory of orthogonal polynomials is to characterize the non-degenerate Hessenberg matrices, for which there exists a non-discrete positive measure  $\mu$ supported on the real line such that the inner product \eqref{MatInnerProd}  can be represented as
\begin{equation}\label{3D-InnerProd}
   \langle p,q \rangle_{\mathcal{G}}=\langle p,q \rangle_{\mu}:= \int  p\,q\,d\mu.
\end{equation}
The aforementioned characterization is  the well-known \emph{Favard Theorem} (c.f. \cite{Dur93}  or \cite{AlMa01} for an overview of this theorem and its extensions), that we revisit    according to the view-point of this paper.

\begin{theorem}[Favard theorem] \label{Th-Favard}  Let $\mathcal{G}$ be a  non-degenerate Hessenberg matrix and    $\langle \cdot, \cdot \rangle_{\mathcal{G}}$ be the  inner product on $\PP$ defined by \eqref{MatInnerProd}.  Then,  there exists a non-discrete positive measure $\mu$ such that  $\langle p,q \rangle_{\mathcal{G}}= \langle p, q \rangle_{\mu}$ for all $p,q \in \PP$ if and only if \; $\Psi( \mathcal{G},1)=\mathcal{O}$.
\end{theorem}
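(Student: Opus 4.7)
The plan is to reduce Theorem \ref{Th-Favard} to the classical Hamburger moment problem by exploiting the machinery already assembled. The central observation is that the condition $\Psi(\mathcal{G},1)=\mathcal{O}$ should turn out to be precisely the Hessenberg-Sobolev condition of index $0$, which says that $\mathcal{M}_{\mathcal{G}}$ is an ordinary Hankel matrix, and this is exactly what returns us to the classical single-measure situation.

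Concretely, I would first specialize Theorem \ref{RelaOperadores} at $\eta=1$ to obtain the identity
$$\Psi(\mathcal{G},1) \;=\; -\,\mathcal{T}\,\Phi(\mathcal{M}_{\mathcal{G}},1)\,\mathcal{T}^{\tran}.$$
Because $\mathcal{G}$ is non-degenerate, $\mathcal{T}$ has a two-sided inverse (see \eqref{InfiniteMatForm03}), so $\Psi(\mathcal{G},1)=\mathcal{O}$ if and only if $\Phi(\mathcal{M}_{\mathcal{G}},1)=\mathcal{U}\,\mathcal{M}_{\mathcal{G}}-\mathcal{M}_{\mathcal{G}}\,\mathcal{U}^{-1}=\mathcal{O}$, which by \eqref{HankelCond} is equivalent to $\mathcal{M}_{\mathcal{G}}$ being a Hankel matrix. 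Combined with Proposition \ref{PropoSimDef+}, which ensures $\mathcal{M}_{\mathcal{G}}$ is symmetric and positive definite of infinite order, this means $\Psi(\mathcal{G},1)=\mathcal{O}$ holds precisely when the entries of $\mathcal{M}_{\mathcal{G}}$ form a Hankel sequence $m_{i,j}=m_{i+j}$ and $\det([\han{m_n}]_k)>0$ for every $k\geq 1$.

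For sufficiency, I would then invoke Lemma \ref{SchoTam-lemma}(1) applied to $\han{m_n}=\mathcal{M}_{\mathcal{G}}$ to produce a non-discrete positive Borel measure $\mu$ on $\RR$ with $m_n=\int x^n\,d\mu$ for every $n$. Using Theorem \ref{Th-FormalOP}(2), which identifies $m_{i,j}$ with $\langle x^i,x^j\rangle_{\mathcal{G}}$, and expanding $p(x)=\sum a_i x^i$ and $q(x)=\sum b_j x^j$ in the monomial basis, I obtain
$$\langle p,q\rangle_{\mu}=\sum_{i,j} a_i b_j\, m_{i+j} = \sum_{i,j} a_i b_j\, m_{i,j} = \langle p,q\rangle_{\mathcal{G}},$$
yielding \eqref{3D-InnerProd}. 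For necessity, if \eqref{3D-InnerProd} holds with some non-discrete positive $\mu$, then $m_{i,j}=\int x^{i+j}\,d\mu$ depends only on $i+j$, so $\mathcal{M}_{\mathcal{G}}$ is Hankel and the first step reverses to give $\Psi(\mathcal{G},1)=\mathcal{O}$.

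I do not anticipate a serious obstacle: once the operator identity of Theorem \ref{RelaOperadores} is in place, the proof is an unwinding of definitions plus one appeal to the classical Hamburger solvability criterion. The only delicate point is to ensure that the measure produced by Lemma \ref{SchoTam-lemma} is not supported on finitely many points; this is exactly guaranteed by part (1) of that lemma together with the strict positive definiteness of $\mathcal{M}_{\mathcal{G}}$ furnished by Proposition \ref{PropoSimDef+}.
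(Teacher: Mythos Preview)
Your proposal is correct and follows essentially the same route as the paper: both directions hinge on the equivalence $\Psi(\mathcal{G},1)=\mathcal{O}\iff\Phi(\mathcal{M}_{\mathcal{G}},1)=\mathcal{O}$ from Theorem~\ref{RelaOperadores}, the positive definiteness of $\mathcal{M}_{\mathcal{G}}$ from Proposition~\ref{PropoSimDef+}, and the Hamburger criterion of Lemma~\ref{SchoTam-lemma}(1). The only cosmetic difference is that for necessity the paper argues $\mathcal{G}=\mathcal{G}^{\tran}$ directly from symmetry of the multiplication operator (noting $\Psi(\mathcal{G},1)=\mathcal{G}^{\tran}-\mathcal{G}$), whereas you equivalently observe that $m_{i,j}=\int x^{i+j}\,d\mu$ forces $\mathcal{M}_{\mathcal{G}}$ to be Hankel and then reverse the operator identity.
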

\begin{proof} Assume that there exists a non-discrete positive measure $\mu$ such that  $\langle p,q \rangle_{\mathcal{G}}= \langle p, q \rangle_{\mu}$ for all $p,q \in \PP$, where $\mathcal{G}$ is a  non-degenerate Hessenberg matrix. From the orthogonality of the generated polynomials $Q_n$ (Theorem \ref{Th-FormalOP}) and the fact that the operator of multiplication by the variable is symmetric with respect to $\langle \cdot, \cdot \rangle_{\mu}$ ($\langle x p, q \rangle_{\mu}=\langle p, x q \rangle_{\mu}$), it is clear that $\mathcal{G}$ is a symmetric tridiagonal matrix, which is equivalent to  $\Psi( \mathcal{G},1)=\mathcal{O}$.

On the other hand, if $\mathcal{G}$ is a  non-degenerate Hessenberg matrix such that $\Psi( \mathcal{G},1)=\mathcal{O}$, we get that  $\mathcal{G}$ a  symmetric Hessenberg matrix or equivalently a non-degenerate tridiagonal matrix. From Theorem \ref{RelaOperadores},
$$ \mathcal{O}=\Phi( \mathcal{M}_{\mathcal{G}},1)=\mathcal{U} \mathcal{M}_{\mathcal{G}}-\mathcal{M}_{\mathcal{G}} \mathcal{U}^{-1},$$
i.e. $\mathcal{M}_{\mathcal{G}} $ is a Hankel matrix, which from Proposition \ref{PropoSimDef+} is positive definite. From Lemma \ref{SchoTam-lemma}, the proof is complete.
\end{proof}

Obviously, under the assumptions of  Theorems \ref{Th-FormalOP} and \ref{Th-Favard},   the sequence $\{Q_n\}$ of polynomials generated by  $\mathcal{G}$ is the sequence of orthogonal polynomials with respect to the measure $\mu$ (i.e. with respect to the inner product \eqref{3D-InnerProd}).

\begin{example} The sequence of polynomials $\{1, x, x^2 , \ldots , x^n , \ldots \}$ is generated by the non-degenerated Hessenberg matrix
$$
\mathcal{G} = \left( \begin{array}{cccc}
0 & 1 & 0 & \ldots \\
0 & 0 & 1 & \ldots \\
0 & 0 & 0 & \ldots \\
\vdots & \vdots & \vdots & \ddots
\end{array} \right),
$$
hence from  Theorem \ref{Th-FormalOP}, the sequence is orthonormal with respect to the inner product \eqref{MatInnerProd}. As $\mathcal{G}$ is a non-symmetric matrix,  $\Psi( \mathcal{G},1)\neq \mathcal{O}$.   Then, from Theorem \ref{Th-Favard} there does not exist a non-discrete positive measure  $\mu$,   such that  $\langle p,q \rangle_{\mathcal{G}}= \langle p, q \rangle_{\mu}$ for all $p,q \in \PP$.
\end{example}

\begin{proof}[{Proof of Theorem  \ref{ThFavardSobolev}.}] Let   $p(x)=\sum_{i=0} ^{n_1} a_i x^i$ and  $q(x)=\sum_{j=0} ^{n_2} b_j x^j$ be  polynomials in  $\PP$ of degree $n_1$  and $n_2$ respectively. Then, from the Sobolev inner product \eqref{Sob-InnerP} we have the representation
 \begin{equation}
\label{Sob-InnerP-2}
\langle p, q \rangle_{\vec{\mathbf{\mu}}_d}= (a_0,\cdots,a_{n_1},0,\cdots) \mathcal{S} (b_0,\cdots,b_{n_2},0,\cdots)^{\tran} = \sum_{i=0} ^{n_1} \sum _{j=0} ^{n_2} a_i \,s_{i,j}\,{b_j} \;,
\end{equation}
where $  \mathcal{S}=(s_{i,j})_{i,j=0}^{\infty}$  with $s_{i,j}=\langle x^i,x^j\rangle_{\vec{\mathbf{\mu}}_d}$, is a Hankel-Sobolev matrix of index $d$.  If $\mathcal{G}$ is a Hessenberg-Sobolev matrix of index $d \in {\ZZp}$, from   \eqref{MatInnerProd}    and \eqref{Sob-InnerP-2},   to prove that
$ \langle p, q \rangle_{\vec{\mathbf{\mu}}_d} =\langle p, q \rangle_{\mathcal{G}} $ is equivalent to prove that $\mathcal{S} =\mathcal{M}_{\mathcal{G}}$, where $\mathcal{M}_{\mathcal{G}}$ is the matrix of formal moments associated with $\mathcal{G}$.

Let $\mathcal{G}$ be a non-degenerate  Hessenberg  matrix. Assume that  there exists  $d \in {\ZZp}$ and $\vec{\mathbf{\mu}}_d= (\mu_0,\mu_1,\dots,\mu_d) \in \mathfrak{M}_{d}(\RR)$ (continuous case), such that    $\mathcal{S} =\mathcal{M}_{\mathcal{G}}$. From Theorem  \ref{Th-hp1},  $\mathcal{S}$ is a Hankel-Sobolev matrix. Therefore, combining Theorem  \ref{ThCharact-HankelS} and Theorem \ref{RelaOperadores}, we get that $\Psi( \mathcal{G},2d+1)=\mathcal{O}$ \; and \; $\Psi( \mathcal{G},2d)\neq\mathcal{O}$. Furthermore, each matrix $\mathcal{H}_k$ defined by  \eqref{ThCharact-02},   is the moment matrix of the measure $\mu_k$, which is a non-negative finite Borel measure whose  support is an infinite subset. Hence, from Lemma \ref{SchoTam-lemma} we have that $\mathcal{H}_k$ s a positive definite matrix of infinite order.

Reciprocally, let $\mathcal{G}$ be a non-degenerate  Hessenberg  matrix satisfying 1 and 2. From Theorems \ref{ThCharact-HessenS} and \ref{RelaOperadores} we conclude that $\mathcal{M}_{\mathcal{G}}$, the matrix of formal moments associated with $\mathcal{G}$, is a Hankel-Sobolev matrix of index $d$, i.e., there exist Hankel matrices $\mathcal{H}_{k}$, $k=0,\,1,\dots,\,d$, such that $$ \mathcal{M}_{\mathcal{G}}=\sum_{k=0}^{d} \left(\mathcal{U}^{-k}\;\mathcal{D}_k \; \mathcal{H}_{k} \; \mathcal{D}_k\;\mathcal{U}^{k}\right).
$$
From  Theorem \ref{Th-hp1} and Lemma \ref{SchoTam-lemma}, the $S\!$-moment  problem for $\mathcal{M}_{\mathcal{G}}$ is defined. Let $\mu_{d-k}$ be a solution of the problem of moments with respect to $\mathcal{H}_{d-k}$ for each $k=0,\,1,\dots,\,d$. If  $\langle p, q \rangle_{\vec{\mathbf{\mu}}_d}$ is as in \eqref{Sob-InnerP},  from Proposition \ref{Uniqueness-HankelS} we get $ \mathcal{S} =\mathcal{M}_{\mathcal{G}}$.
 \end{proof}

The following result may be proved in much the same way as Theorem  \ref{ThFavardSobolev}, using the appropriate assertions of Lemma \ref{SchoTam-lemma} for the case of measures supported on finite subsets.

\begin{theorem} [Favard  type theorem for discrete case] \label{ThFavardSobolevDiscrete} Let $\mathcal{G}$ be a non-degenerate  Hessenberg  matrix. Then, there exists  $d \in \ZZp$ and $\vec{\mathbf{\mu}}_d \in \mathfrak{M}_{d}(\RR)$  such that $ \langle p, q \rangle_{\vec{\mathbf{\mu}}_d} =\langle p, q \rangle_{\mathcal{G}} $ if and only if \;
\begin{enumerate}
  \item $\mathcal{G}$ is a Hessenberg-Sobolev matrix of index $d \in  \ZZp$.
  \item  The Hankel matrix    $\mathcal{H}_{0}$  defined by  \eqref{ThCharact-02}, is a positive definite matrix of infinite order and for  each  $k=0,\,1,\dots,\;d-1$;  the  matrix    $\mathcal{H}_{d-k}$   is a positive definite matrix of  order $m_k \in \ZZp$.
\end{enumerate}
\end{theorem}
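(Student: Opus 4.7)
The plan is to transcribe the proof of Theorem \ref{ThFavardSobolev} almost verbatim, invoking the two parts of Lemma \ref{SchoTam-lemma} selectively: part (1) for $\mu_0$, which has infinite support, and part (2) for the finitely supported derivative measures $\mu_1,\dots,\mu_d$. As in the continuous case, writing both bilinear forms through their matrix representations \eqref{MatInnerProd} and \eqref{Sob-InnerP-2} reduces the identity $\langle p,q\rangle_{\vec{\mathbf{\mu}}_d}=\langle p,q\rangle_{\mathcal{G}}$ on $\PP$ to the single matrix identity $\mathcal{S}=\mathcal{M}_{\mathcal{G}}$, where $\mathcal{S}$ is the $S$-moment matrix of $\vec{\mathbf{\mu}}_d$.

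For the necessity, suppose such a discrete $\vec{\mathbf{\mu}}_d\in\mathfrak{M}_d(\RR)$ exists. Theorem \ref{Th-hp1} identifies $\mathcal{M}_{\mathcal{G}}=\mathcal{S}$ as a Hankel-Sobolev matrix of index $d$, so by Theorem \ref{RelaOperadores} (or directly by Theorem \ref{ThCharact-HessenS}) condition 1 holds. Theorem \ref{ThCharact-HankelS} recovers the Hankel blocks $\mathcal{H}_0,\dots,\mathcal{H}_d$ through \eqref{ThCharact-02}, and these coincide with the classical moment matrices of $\mu_0,\dots,\mu_d$ respectively. Lemma \ref{SchoTam-lemma}(1) applied to $\mu_0$ (infinite support) then makes $\mathcal{H}_0$ positive definite of infinite order, while Lemma \ref{SchoTam-lemma}(2) applied to each finitely supported $\mu_{d-k}$ makes $\mathcal{H}_{d-k}$ positive definite of some finite order $m_k$, establishing condition 2.

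For the sufficiency, assume 1 and 2. By Theorems \ref{ThCharact-HessenS} and \ref{RelaOperadores}, $\mathcal{M}_{\mathcal{G}}$ admits a Hankel-Sobolev decomposition with blocks $\mathcal{H}_0,\dots,\mathcal{H}_d$ given by \eqref{ThCharact-02}; note that $\mathcal{H}_d\neq\mathcal{O}$, since it is positive definite of the positive order $m_0$. Lemma \ref{SchoTam-lemma}(1) furnishes a Borel measure $\mu_0$ on $\RR$ with infinite support and moment matrix $\mathcal{H}_0$, and Lemma \ref{SchoTam-lemma}(2) furnishes, for each $k=0,\dots,d-1$, a measure $\mu_{d-k}$ supported on exactly $m_k$ points with moment matrix $\mathcal{H}_{d-k}$. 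The tuple $\vec{\mathbf{\mu}}_d=(\mu_0,\dots,\mu_d)$ then lies in $\mathfrak{M}_d(\RR)$ (discrete case), and its $S$-moment matrix $\mathcal{S}$ inherits the very same Hankel-Sobolev decomposition as $\mathcal{M}_{\mathcal{G}}$, whence Proposition \ref{Uniqueness-HankelS} forces $\mathcal{S}=\mathcal{M}_{\mathcal{G}}$, delivering the desired equality. The only point deserving care, and the only place where the argument diverges from Theorem \ref{ThFavardSobolev}, is the index bookkeeping: one must pair the block $\mathcal{H}_0$ with the infinite-support measure $\mu_0$ and each $\mathcal{H}_{d-k}$ ($k=0,\dots,d-1$) with a finitely supported $\mu_{d-k}$; once this matching is respected, no serious obstacle remains.
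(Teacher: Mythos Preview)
Your proposal is correct and is precisely what the paper intends: the paper does not write out a separate proof but states that the result ``may be proved in much the same way as Theorem \ref{ThFavardSobolev}, using the appropriate assertions of Lemma \ref{SchoTam-lemma} for the case of measures supported on finite subsets,'' and your argument carries this out faithfully, applying part (1) of Lemma \ref{SchoTam-lemma} to $\mathcal{H}_0$ and part (2) to each $\mathcal{H}_{d-k}$ for $k=0,\dots,d-1$.
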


The previous theorem is a refinement of \cite[Lemma 3]{Dur93}.

\begin{theorem} [Favard  type theorem for discrete-continuous case] \label{ThFavardSobolevDiscreteCont} Let $\mathcal{G}$ be a non-degenerate  Hessenberg  matrix. Then, there exists  $d \in \ZZp$ and $\vec{\mathbf{\mu}}_d \in \mathfrak{M}_{d}(\RR)$  such that $ \langle p, q \rangle_{\vec{\mathbf{\mu}}_d} =\langle p, q \rangle_{\mathcal{G}} $ if and only if \;
\begin{enumerate}
  \item $\mathcal{G}$ is a Hessenberg-Sobolev matrix of index $d \in  \ZZp$.
  \item The Hankel matrix    $\mathcal{H}_{d}$  defined by  \eqref{ThCharact-02}, is a positive definite matrix of infinite order and for  each  $k= 1,\,2\dots,\;d$;  the  matrix    $\mathcal{H}_{d-k}$   is a positive definite matrix of  order $m_k \in \ZZp$.
\end{enumerate}
\end{theorem}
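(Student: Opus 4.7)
The plan is to adapt the argument used for Theorem \ref{ThFavardSobolev}, replacing the use of part (1) of Lemma \ref{SchoTam-lemma} on every $\mathcal{H}_{d-k}$ by the appropriate mix of parts (1) and (2) dictated by which of the $\mu_{d-k}$ is allowed to have finite support. As in the continuous case, the identity $\langle p,q\rangle_{\vec{\mathbf{\mu}}_d} = \langle p,q\rangle_{\mathcal{G}}$ for all $p,q\in\PP$ is equivalent, by \eqref{MatInnerProd} and \eqref{Sob-InnerP-2}, to the matrix equality $\mathcal{S}=\mathcal{M}_{\mathcal{G}}$, where $\mathcal{S}=(\langle x^i,x^j\rangle_{\vec{\mathbf{\mu}}_d})_{i,j}$ is the Sobolev moment matrix. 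Both implications therefore reduce to understanding when $\mathcal{M}_{\mathcal{G}}$ admits a Hankel-Sobolev decomposition whose components $\mathcal{H}_{d-k}$ are classical moment matrices of the required support type.

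For the ``only if'' direction, I assume $\mathcal{S}=\mathcal{M}_{\mathcal{G}}$ for some discrete-continuous $\vec{\mathbf{\mu}}_d$. By Theorem \ref{Th-hp1} (the reformulation of \cite[Th.~1]{BaLoPi99}), $\mathcal{M}_{\mathcal{G}}$ is a Hankel-Sobolev matrix of index $d$, and Theorems \ref{ThCharact-HankelS}, \ref{RelaOperadores} and \ref{ThCharact-HessenS} then give that $\mathcal{G}$ is a Hessenberg-Sobolev matrix of index $d$, so condition 1 holds. By uniqueness of the decomposition (Proposition \ref{Uniqueness-HankelS}) the Hankel block $\mathcal{H}_{d-k}$ extracted from \eqref{ThCharact-02} coincides with the classical moment matrix of $\mu_{d-k}$, so Lemma \ref{SchoTam-lemma}(1) applied to $\mu_d$ (infinite support) gives that $\mathcal{H}_d$ is positive definite of infinite order, while Lemma \ref{SchoTam-lemma}(2) applied to each $\mu_{d-k}$, $k\geq 1$, supported on $m_k$ points, yields positive definiteness of order $m_k$ for $\mathcal{H}_{d-k}$, which is condition 2.

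For the converse, assuming 1 and 2, Theorems \ref{ThCharact-HessenS} and \ref{RelaOperadores} produce the decomposition of $\mathcal{M}_{\mathcal{G}}$ with components $\mathcal{H}_{d-k}$ as in \eqref{ThCharact-02}. The positivity hypothesis together with Lemma \ref{SchoTam-lemma} solves each individual classical moment problem: Lemma \ref{SchoTam-lemma}(1) furnishes a measure $\mu_d$ with infinite support and moment matrix $\mathcal{H}_d$, and Lemma \ref{SchoTam-lemma}(2) furnishes, for $k=1,\dots,d$, a discrete measure $\mu_{d-k}$ with exactly $m_k$ support points and moment matrix $\mathcal{H}_{d-k}$. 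The Sobolev moment matrix $\mathcal{S}$ of $\vec{\mathbf{\mu}}_d=(\mu_0,\dots,\mu_d)$ is then, by construction, a Hankel-Sobolev matrix of index $d$ with the same blocks as $\mathcal{M}_{\mathcal{G}}$, so Proposition \ref{Uniqueness-HankelS} yields $\mathcal{S}=\mathcal{M}_{\mathcal{G}}$ and hence the required identity of inner products. The main ``obstacle'' is purely bookkeeping: one must carefully track the pairing between the index $d-k$ and the support type of $\mu_{d-k}$, and reconcile the definition of $\mathfrak{M}_d(\RR)$, in which $\mu_0$ is the privileged infinite-support measure, with the discrete-continuous setting, in which $\mu_d$ plays that role; this is a definitional adaptation that does not affect the structure of the argument.
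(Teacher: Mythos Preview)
Your proposal is correct and follows exactly the approach the paper indicates: the paper does not give a separate proof of Theorem \ref{ThFavardSobolevDiscreteCont} but simply states that it ``may be proved in much the same way as Theorem \ref{ThFavardSobolev}, using the appropriate assertions of Lemma \ref{SchoTam-lemma} for the case of measures supported on finite subsets,'' which is precisely what you carry out. Your closing remark about the tension between the definition of $\mathfrak{M}_d(\RR)$ (where $\mu_0$ is the distinguished infinite-support measure) and the discrete-continuous hypothesis (where $\mu_d$ plays that role) is a legitimate observation about the paper's formulation rather than a flaw in your argument.
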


 
\end{document}